\theoremstyle{plain}
\newtheorem{thm}{\protect\theoremname}
  \theoremstyle{plain}
  \newtheorem{cor}[thm]{\protect\corollaryname}
  \theoremstyle{plain}
  \newtheorem{lem}[thm]{\protect\lemmaname}
  \theoremstyle{plain}
  \newtheorem{prop}[thm]{\protect\propositionname}
  \providecommand{\corollaryname}{Corollary}
  \providecommand{\lemmaname}{Lemma}
  \providecommand{\propositionname}{Proposition}
\providecommand{\theoremname}{Theorem}
\def \Conv {\mathrm{Conv}}
\def \W {\mathrm{W}}
\def \W {\mathrm{W}}
\def \CC {\mathbf{C}}
\begin{document}

\title{Cone points of Brownian motion in arbitrary dimension}

\author{Yotam Alexander and Ronen Eldan}
\maketitle
\begin{abstract}
We show that the convex hull of the path of Brownian motion in $n$-dimensions, up to time $1$,
is a smooth set. As a consequence, we conclude that a Brownian motion
in any dimension almost surely has no cone points for any cone whose
dual cone is nontrivial. 
\end{abstract}

\section{Introduction}

Fix a dimension $n\geq2$. Let $B(\cdot)$ be a standard Brownian
motion in $\mathbb{R}^{n}$. Our main object of concern in this paper
will be 
\[
K=\Conv(\{B(t)| ~0\leq t\leq1\}).
\]
where $\Conv(\cdot)$ denotes the convex hull. \\

For a convex set $K$ and a point $x\in\partial K$, we say that
$x$ is \emph{singular} if the supporting hyperplane to $K$ at $x$ is not
unique. We say that $K$ is smooth if none of its boundary points
are singular. The main result of this paper is the following: 
\begin{thm} \label{thm:main}
$K$ is smooth almost surely. 
\end{thm}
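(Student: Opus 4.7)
The plan is first to reduce smoothness of $K$ to a ``no singular vertex'' statement. Assume $x\in\partial K$ is singular; then its outward normal cone has dimension at least two, so we may pick linearly independent $\theta_1,\theta_2\in S^{n-1}$ supporting $K$ at $x$. The point $x$ lies in the face
\[
F=\bigl\{y\in K:\langle y,\theta_i\rangle=\max_{z\in K}\langle z,\theta_i\rangle,\ i=1,2\bigr\},
\]
and any vertex $y$ of $F$ is itself an extreme point of $K$ whose normal cone still contains $\theta_1,\theta_2$. Since $K=\Conv(B([0,1]))$ is the convex hull of a compact set, every extreme point of $K$ lies on the path, so $y=B(\tau)$ for some $\tau\in[0,1]$. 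It therefore suffices to prove that almost surely there is no triple $(\tau,\theta_1,\theta_2)$ with $\theta_1,\theta_2$ linearly independent and
\[
\langle B(t)-B(\tau),\theta_i\rangle\leq 0\qquad\text{for all }t\in[0,1],\ i=1,2.
\]

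The ``per-pair'' version is easy. For fixed linearly independent $(\theta_1,\theta_2)$, such a $\tau$ would have to simultaneously maximize the two continuous one-dimensional processes $t\mapsto\langle B(t),\theta_i\rangle$. Writing $\theta_2=a\theta_1+b\theta_1^{\perp}$ with $b\neq 0$ and $\theta_1^{\perp}\perp\theta_1$, the processes $\langle B(\cdot),\theta_1\rangle$ and $\langle B(\cdot),\theta_1^{\perp}\rangle$ are independent one-dimensional Brownian motions; conditionally on the first, the argmax of the Gaussian process $a\langle B(\cdot),\theta_1\rangle+b\langle B(\cdot),\theta_1^{\perp}\rangle$ has a diffuse conditional law, so it coincides with the (already determined) argmax of $\langle B(\cdot),\theta_1\rangle$ with conditional probability zero. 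Equivalently, projecting $B$ onto $\mathrm{span}(\theta_1,\theta_2)$ gives a planar Brownian motion (up to a linear change of coordinates), and the condition becomes the absence of a planar cone point in the wedge $\{\langle\cdot,\theta_1\rangle\leq 0\}\cap\{\langle\cdot,\theta_2\rangle\leq 0\}$, which is the classical planar case.

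The main obstacle, and where I expect the bulk of the work to lie, is upgrading this per-pair statement to a uniform one over all triples $(\tau,\theta_1,\theta_2)$. A naive Fubini over $(\theta_1,\theta_2)\in (S^{n-1})^2$ is too weak in dimension $n\geq 3$: for a single putative singular vertex the ``bad'' pairs form a set of dimension at most $2$ inside the $2(n-1)$-dimensional product sphere, hence a set of null product measure, so the Fubini-integrated probability vanishes for trivial reasons. The route I would pursue is a quantitative covering. Construct $\varepsilon$-nets in $[0,1]$ (for $\tau$) and on the Grassmannian $\mathrm{Gr}(2,n)$ (for the $2$-plane $\mathrm{span}(\theta_1,\theta_2)$); establish a polynomial-in-$\varepsilon$ small-ball bound for ``$\varepsilon$-approximate cone point'' configurations inside each cell; then sum over the net and apply Borel--Cantelli as $\varepsilon\to 0$. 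Producing a sharp planar wedge-avoidance small-ball estimate that is \emph{uniform} over $\mathrm{Gr}(2,n)$ and strong enough to beat the cardinality of the net is the genuinely new ingredient needed for $n\geq 3$, beyond the classical planar result, and is where I expect the technical heart of the argument to be.
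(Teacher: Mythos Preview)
Your proposal is an outline, not a proof: you correctly reduce to extreme points on the path, correctly handle the per-pair case, and correctly isolate the obstacle (uniformity over all $2$-planes). But the proposed resolution---covering $\mathrm{Gr}(2,n)$ and applying a small-ball wedge estimate in each cell---is left as a program, and a back-of-envelope count shows it does not close as stated. Projecting onto a fixed $2$-plane gives a planar Brownian motion, and a $\delta$-perturbation of the plane moves every projected point by $O(\delta)$; so the quantity you actually need is
\[
q(\delta,\kappa)\;:=\;\mathbb{P}\bigl(\text{planar BM on }[0,1]\text{ has a }\delta\text{-approximate cone point of opening }\pi-\kappa\bigr),
\]
and you need $q(\delta,\kappa)\cdot\delta^{-\dim\mathrm{Gr}(2,n)}=q(\delta,\kappa)\cdot\delta^{-2(n-2)}\to 0$. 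A first-moment estimate for $q$ (net over $\tau$ with step $\delta^2$, net over wedge orientation with fixed step $\sim\kappa$, Spitzer's exponent $\pi/(\pi-\kappa)$ applied forwards and backwards in each cell) yields only $q(\delta,\kappa)\lesssim C_\kappa\,\delta^{\,2\pi/(\pi-\kappa/2)-2}$, which for small $\kappa$ is $\delta^{O(\kappa)}$. This cannot beat $\delta^{-2(n-2)}$ when $n\ge 3$ and $\kappa$ is small---and you must rule out \emph{every} $\kappa>0$. So the union bound over the Grassmannian fails exactly in the regime you need, and you have not indicated any mechanism to repair it.

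The paper's route is genuinely different and sidesteps the Grassmannian entirely. It approximates $K$ by the polytope $K_\alpha=\Conv\{B(t):t\in\Lambda_\alpha\}$, where $\Lambda_\alpha$ is a Poisson process of intensity $\alpha$ on $[0,1]$, and shows (Lemma~\ref{lem:annoying}, Proposition~\ref{prop:reduction}) that non-smoothness of $K$ forces $K_\alpha$, for all large $\alpha$, to have a pair of \emph{discordant facets}: facets whose outer normals make angle $\ge\kappa/16$ and which both lie within $\sim\phi(\alpha)^2/\sqrt{\alpha}$ of the tip of the wedge they span. A Campbell--Little--Mecke identity converts the expected number of such pairs into $\alpha^{2n}\int_{\Delta_n\times\Delta_n}\mathbb{P}\bigl(\tilde{\CC}_{\alpha,\kappa}(r,s)\bigr)\,dr\,ds$, and the core estimate (Proposition~\ref{prop:tildec}) bounds the integrand by $\alpha^{-2n-c\kappa}$ times an integrable singularity in $(r,s)$. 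The structural point is that the facet condition itself---each of the $2n+1$ Brownian-bridge segments between the consecutive times $t_i$ must stay on one side of a hyperplane---already contributes a factor of order $\alpha^{-1}/(t_{i+1}-t_i)$ per interior segment, producing the $\alpha^{-2n}$ that exactly cancels the combinatorial $\alpha^{2n}$. The residual $\alpha^{-c\kappa}$ comes from a single ``special'' segment whose endpoint is forced to be near the wedge tip (Lemma~\ref{lem:specialindex} and parts (iii)--(iv) of Proposition~\ref{prop:bounds}), and that alone suffices to drive the expectation to zero. In short, the facets of $K_\alpha$ give a data-adapted discretization whose cardinality is paid for by the half-space constraints, not by the wedge exponent; your Grassmannian net has no analogue of this built-in cancellation, which is why the exponent bookkeeping does not balance.
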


In two dimensions, the fact that the boundary of $K$ is $C_{1}$-smooth
was first stated by Paul L\'evy in 1948 \cite{LE}, and was later rigorously
established in \cite{ELB} (see also \cite{CHM}). \\

When $n=2$, we say that $x$ is a $\alpha$-cone point of the two
dimensional Brownian motion $B(t),~ 0\leq t\leq1$, if there exists
$t_{0}\in[0,1]$ and $\delta>0$ such that $B(t_{0})=x$ and such that $B(t)\in W, ~ \forall t\in[t_{0}-\delta,t_{0}+\delta]$
where $W$ is a wedge whose tip lies at $x$ and with angle $\alpha$. In two dimensions, the
smoothness of $K$ is related to the absence of cone points of angle
smaller than $\pi$. Furthermore, it was proven in \cite{EV} that,
almost surely, the Hausdorff dimension of the set of $\alpha$-cone
points is equal to $2-\frac{2\pi}{\alpha}$. \\

The objective of the present paper is to initiate the investigation
of cone points in higher dimension. By taking a countable intersection
over rational $\delta$, we can strengthen the aforementioned smoothness,
proving that cone points in which the supporting cone is strictly
contained in a half-space do not exist in any dimension: 
\begin{cor}
Almost surely, there does not exists $t_{0}\in[0,1]$, $\delta>0$
such that $\bigl\{ B(t),|t-t_{0}|\leq\delta\bigr\}$ is contained
in a convex cone $\mathcal{C}$ whose tip lies at $B(t_{0})$ and which
is strictly contained in a half-space. 
\end{cor}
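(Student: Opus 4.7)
The plan is to reduce the corollary to Theorem~\ref{thm:main} by a Brownian-scaling argument combined with a countable intersection over rational time intervals. First I would fix rationals $0\le a<b\le1$ and observe that
\[
\tilde B(s):=(b-a)^{-1/2}\bigl(B(a+s(b-a))-B(a)\bigr),\qquad s\in[0,1],
\]
is a standard Brownian motion on $[0,1]$. Theorem~\ref{thm:main} then says that $\Conv\{\tilde B(s):s\in[0,1]\}$ is almost surely smooth, and since smoothness of a full-dimensional convex set is preserved by affine bijections, so is $\Conv\{B(t):t\in[a,b]\}$. Intersecting these full-measure events over the countable set of rational pairs, I obtain a probability-one event $\Omega_0$ on which $\Conv\{B(t):t\in[a,b]\}$ is smooth simultaneously for every rational $0\le a<b\le1$.

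Working on $\Omega_0$, I would proceed by contradiction: suppose there exist $t_0\in[0,1]$, $\delta>0$, and a convex cone $\mathcal{C}$ with tip at $B(t_0)$, strictly contained in some half-space, such that $\{B(t):|t-t_0|\le\delta\}\subseteq\mathcal{C}$. The next step is to select rationals $a\le t_0\le b$ with $a<b$ and $[a,b]\subseteq[t_0-\delta,t_0+\delta]\cap[0,1]$; this is straightforward when $t_0\in(0,1)$, and the endpoint cases $t_0=0$ or $t_0=1$ are handled by forcing $a=0$ or $b=1$ respectively. By monotonicity of convex hulls, $\Conv\{B(t):t\in[a,b]\}\subseteq\mathcal{C}$, and $B(t_0)$ is in this convex hull while being the apex of $\mathcal{C}$.

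To extract the contradiction I would use that $\mathcal{C}$, being strictly contained in a half-space, has a dual cone containing at least two linearly independent vectors, so there exist two distinct hyperplanes $H_1,H_2$ through $B(t_0)$ that support $\mathcal{C}$ at its apex. Each $H_i$ automatically supports the smaller convex set $\Conv\{B(t):t\in[a,b]\}$ at the common point $B(t_0)$, which is therefore a boundary point with non-unique supporting hyperplane, i.e.\ a singular point. This contradicts the definition of $\Omega_0$ and completes the argument.

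The main obstacle, modest as it is, lies in the rational bracketing of $t_0$ at the endpoints of $[0,1]$, and in verifying that $B(t_0)$ is indeed on the boundary of $\Conv\{B(t):t\in[a,b]\}$ (which follows because an interior point would force a neighbourhood to lie in $\mathcal{C}$, contradicting that the apex is a boundary point of the half-space-contained cone). Beyond these, the argument is a bookkeeping exercise in Brownian scaling and the monotone-inclusion principle for supporting hyperplanes, and I do not foresee a serious difficulty.
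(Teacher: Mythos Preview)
Your proposal is correct and follows essentially the same route as the paper: apply Theorem~\ref{thm:main} via Brownian scaling to each $K[a,b]$ with rational endpoints, intersect over the countable family, bracket $t_0$ by a rational subinterval of $[t_0-\delta,t_0+\delta]$, and use that a cone strictly contained in a half-space admits two distinct supporting hyperplanes through its apex. Your treatment is in fact slightly more careful than the paper's, in that you explicitly address why $B(t_0)$ must lie on the boundary of $K[a,b]$ and how the endpoint cases $t_0\in\{0,1\}$ are handled.
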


\begin{proof}
For any $t_{1}<t_{2}$, define $K[t_{1},t_{2}]:=\Conv(\{B(t),t_{1}\leq t\leq t_{2}\})$.
Because of Brownian scaling and the strong Markov property, Theorem
1 implies that $K[t_{1},t_{2}]-B(t_{1})$ (and hence also $K[t_{1},t_{2}]$
itself) is almost surely smooth. Taking a countable intersection over
all rational $t_{1},t_{2}$, we get that almost surely $K[t_{1},t_{2}]$
is smooth for all $t_{1},t_{2}\in[0,1]\cap\mathbb{Q}$. For every
$t_{0}\in[0,1]$, $\delta>0$, there exist a pair of rational numbers
$q_{1},q_{2}$ such that $t_{0}\in[q_{1},q_{2}]$ and $[q_{1},q_{2}]\subseteq\bigl\{|t-t_{0}|\leq\delta\bigr\}$.
The smoothness of $K[q_{1},q_{2}]$ implies that $\bigl\{ B(t),|t-t_{0}|\leq\delta\bigr\}$
cannot be contained in a cone whose tip lies at $B(t_{0})$ which
is strictly contained in a halfspace. Indeed, such a cone would necessarily also be contained in the intersection of two distinct halspaces. Thus, either $B(t_0)$ is in the interior of $K[q_1,q_2]$ or if it on the boundary, that would contradict the uniqueness of the supporting hyperplanes to $K[q_{1},q_{2}]$
at $B(t_{0})$.
\end{proof}

In the planer case ($n=2$), the smoothness of $K$ can be reduced to a countable intersection of events in the following way: For every pair of rational directions, consider the event that the supporting hyperplanes to $K$ in these directions coincide at a point of the Brownian motion. If $K$ is not smooth, there must exist two rational directions whose respective event occurs. This allows us to reduce the smoothness to bounds concerning the local behavior of Brownian motion, which in turn relies on the decomposition of the Brownian motion to two independent coordinates and several classical bounds regarding the probability of a one dimensional Brownian motion to be contained in a small interval. \\

Dimensions higher than $2$ seem to pose a significant additional difficulty: unlike the planar case, it appears that one is not able to express the smoothness
of the convex hull as the intersection of a countable family of \textquotedbl{}local\textquotedbl{}
events. To put it differently, in order for a boundary point of the
convex hull in three dimensions to be smooth, one needs to check that
the convex hull is not contained in any wedge among a one-parameter
family of wedges, and this event cannot be written as an intersection
of a countable number of events that only depend on a two-dimensional
behavior. \\

Alternatively, the smoothness of a convex body amounts to the smoothness
of every 2-dimensional projection, however there is no hope of reducing
the smoothness of $K$ to the smoothness of a countable set of two-dimensional
projection. To illustrate this, consider the set $\{(x_{1},x_{2},x_{3});x_{3}>x_{1}^{2}+|x_{2}|\}$.
This set is not smooth, while for all directions except a set of measure
zero, the corresponding two-dimensional projection is smooth. Therefore
there does not seem to be a way to take advantage of the fact that
every fixed two-dimensional projection of $K$ is smooth with probability
one.

\section{An approximating polytope}

A key step in our proof is to consider an approximation of $K$ by
polytopes defined as the convex hull of an associated random walk
partial to the range of $B(t)$. In section \ref{sec:reduction} below, we will see how to reduce the smoothness of $K$ to quantitative behavior of the facets of those discrete approximations. In turn, the behavior of those facets can be made tractable via a formula derived in Section \ref{sec:formulafacets}. The construction as well as some of the formulas that make it accessible were used in \cite{ELD} in order to compute several quantities related to $K$, such as its volume and its surface area. \\

We construct the random walk as follows. Let $P=((x_{1},y_{1}),(x_{2},y_{2}),...)$ be a Poisson point process
of intensity $1$, independent of $B(\cdot)$, in the set $[0,1]\times[0,\infty]$
and for all $\alpha\geq0$, define 
\[
\Lambda_{\alpha}=\left\{ x_{i}|~y_{i}\leq\alpha,~i\in\mathbb{N}\right\} \cup\{0,1\}.
\]
The process $\Lambda$ can be thought of as a \textquotedbl{}Poisson
rain\textquotedbl{} on the interval $[0,1]$: note that for all $\alpha\geq0$,
$\Lambda_{\alpha}$ is a Poisson point-process of intensity $\alpha$
on the unit interval and that the family $\Lambda_{\alpha}$ is increasing
with $\alpha$. For a fixed value of $\alpha$, writing $\Lambda_{\alpha}=(t_{1},...,t_{N})$
where $0=t_{1}\leq...\leq t_{N}=1$, we can think of $(B(t_{1}),B(t_{2}),...,B(t_{N}))$
as a random walk in $\mathbb{R}^{n}$. Finally, for all $\alpha>0$,
we define 
\[
K_{\alpha}=\Conv(\{B(t)|~t\in\Lambda_{\alpha}\}),
\]
so $K_{\alpha}$ is a monotone sequence of discrete approximations
of $K$.

\subsection{A formula for the facets} \label{sec:formulafacets}

In this section, we recall some notions from \cite{ELD}, towards a formula which allows us to calculate the expectation
of quantities related to the facets of $K_\alpha$. We begin with some notation. Let $\Delta_{n}$ be the $n$-dimensional simplex,
namely 
\[
\Delta_{n}=\left\{ (r_{1},...,r_{n})\in[0,1]^{n};~r_{1} < \dots < r_{n}\right\} .
\]
Next, for $r\in\Delta_{n}$ we define (by slight abuse of notation)
\[
B(r):=\{B(r_{1}),...,B(r_{n})\},
\]
and $F_{r}:=\Conv\bigl(B(r)\bigr)$ which is almost surely an $(n-1)$-dimensional
simplex. Let $n_{r}$ be a unit vector normal to $F_{r}$ chosen such
that $\langle n_{r},B(r_{1})\rangle\geq0$.

Next, for a Borel subset $A\subset\Delta_{n}$ we define 
\[
q_{\alpha}(A)=\#\{r\in A;~F_{r}\mbox{ is a facet in the boundary of }K_{\alpha}\}.
\]
We also need the definition of the point process 
\[
w_{\alpha}(A)=\#\left\{ r\in A;~~\{r_{1},..,r_{n}\}\subset\Lambda_{\alpha}\right\} 
\]
which we can think of as points $r\in\Delta_{n}$ which are candidates
to be facets of $K_{\alpha}$ in the sense that all their vertices
points of the random walk. Moreover, we define the (deterministic) measures
\[
\mu_{\alpha}(\cdot)=\mathbb{E}\left[q_{\alpha}(\cdot)\right]\mbox{ and }\nu_{\alpha}(\cdot)=\mathbb{E}[w_{\alpha}(\cdot)].
\]

Let $\mathcal{F}_{B}$ be the $\sigma$-algebra generated by the Brownian
motion $B(\cdot)$ (so that a random variable is measurable with respect
to $\mathcal{F}_{B}$ if and only if it does not depend on the point
process $\Lambda$). Let $f:\Delta_{n}\to\mathbb{R}$ be a function
such that for all $r\in\Delta_{n}$, $f(r)$ is a random variable
which is measurable with respect to $\mathcal{F}_{B}$. Then by a
calculation using the Campbell-Little-Mecke formula, one obtains (see \cite[Equation (3.7)]{ELD})
\[
\mathbb{E}\left[\int_{\Delta_{n}}f(r)dq_{\alpha}(r)\right]=\alpha^{n}\int_{\Delta_{n}}\mathbb{E}\left[f(r)\mathbf{1}_{E_{\alpha}(r)}\right]dr,
\]
where 
\[
E_{\alpha}(r):=\left\{ F_{r}\mbox{ is a facet in the boundary of }\Conv\left(B(r)\cup K_{\alpha}\right)\right\} .
\]

Following the exact same lines, this formula can be generalized in
the following sense: Let $f:\Delta_{n}\times\Delta_{n}\to\mathbb{R}$
be a function such that for all $(r,s)\in\Delta_{n}\times\Delta_{n}$,
$f(r,s)$ is a random variable which is measurable with respect to
$\mathcal{F}_{B}$. Define 
\[
E_{\alpha}(r,s):=\left\{ F_{r},F_{s}\mbox{ are facets in the boundary of }\Conv\left(B(r)\cup B(s)\cup K_{\alpha}\right)\right\} .
\]
Then, the generalized formula reads
$$
\mathbb{E}\left[\int_{\Delta_{n}\times\Delta_{n}}f(r,s)dq_{\alpha}(r)dq_{\alpha}(s)\right]= \alpha^{2n}\int_{\Delta_{n}\times\Delta_{n}}\mathbb{E}\left[f(r,s)\mathbf{1}_{E_{\alpha}(r,s)}\right]drds.
$$
Since $E_{\alpha}(r,s)\subseteq E_{\alpha}(r)\cap E_{\alpha}(s)$, we finally have
\begin{equation}\label{eq:formula1}
\mathbb{E}\left[\int_{\Delta_{n}\times\Delta_{n}}f(r,s)dq_{\alpha}(r)dq_{\alpha}(s)\right] \leq \alpha^{2n}\int_{\Delta_{n}\times\Delta_{n}}\mathbb{E}\left[f(r,s)\mathbf{1}_{E_{\alpha}(r)\cap E_{\alpha}(s)}\right]drds. 
\end{equation}

\subsection{Reducing the smoothness of $K$ to an approximate smoothness of $K_{\alpha}$} \label{sec:reduction}

In this section lies the idea behind the definition of the approximating polytope, $K_\alpha$. We will show that the non-smoothness of $K$ amounts, roughly, to the following asymptotic behavior of $K_\alpha$, as $\alpha \to \infty$: Given that $K$ is not smooth, for sufficiently large $\alpha$, the polytope $K_\alpha$ has two \emph{discordant facets}, namely, facets with distance of order $1/\sqrt{\alpha}$ from each other, such that the angle between the corresponding normal directions is bounded away from zero. In the upcoming subsections, our main goal will be to show that this is not possible. \\

Let us first introduce some notation. For $(r,s)\in\Delta_{n}\times\Delta_{n}$,
define $L(r,s)$ to be the intersection of the $(n-1)$-dimensional
affine subspaces spanned by $F_{r}$ and $F_{s}$. Moreover, define
\[
W(r,s):=\left\{ x+y;x\in L(r,s),\langle y,n_{r}\rangle\leq0\mbox{ and }\langle y,n_{s}\rangle\leq0\right\} ,
\]
the wedge defined by the facets $F_{r},F_{s}$ and set $\theta(r,s)=\arccos(\langle n_{r},n_{s}\rangle)$,
the inner angle of the wedge $W(r,s)$. \\
A key definition for us will be the event
\[
\CC_{\alpha,\gamma,\theta}(r,s):= \left\{ \theta(r,s)\geq\theta\right\} \cap\{\mathrm{dist}(F_{r}\cup F_{s},L(r,s))\leq\gamma\}.
\]
where $\alpha,\theta,\gamma > 0$. When the event $\CC_{\alpha,\gamma,\theta}(r,s)$ holds, we will say that $F_s$ and $F_r$ are discordant facets. Now let 
\[
\phi(\alpha):=e^{\sqrt{\log\alpha}}.
\]
Define the event 
\[
N_{\alpha}[a,b]=\left\{ \forall t\in[a,b],\Lambda_{\alpha}\cap[t-\alpha^{-1}\phi(\alpha),t+\alpha^{-1}\phi(\alpha)]\neq\emptyset\right\} ,
\]
and denote $N_{\alpha}:=N_{\alpha}\left[0,1\right]$. Next, define
$M_\delta[a,b]:=\sup_{t_{1},t_{2}\in\left[a,b\right],\!|t_{1}-t_{2}|\leq\delta}|B(t_{1})-B(t_{2})|$,
denote $M_\delta:=M_\delta[0,1]$ and consider the events
\[
Y_{\alpha}[a,b]=\left\{ M_\delta[a,b]\leq\delta^{1/2}\phi(\alpha)+\alpha^{-2n-1},~~\forall\delta\leq b-a\right\} ,
\]
and
\[
R_{\alpha}[a,b]:=N_{\alpha}[a,b]\cap Y_{\alpha}[a,b],\quad R_{\alpha}:=R_{\alpha}[0,1].
\]
The following lemma, whose proof is postponed to the appendix, is based on standard estimates.
\begin{lem} \label{lem:Ralpha}
For every dimension $n\geq2$, there exists $C>0$ such that for all $\alpha > C$ we have 
\begin{equation}
\mathbb{P}\big(R_{\alpha}^{C}\big) \leq \alpha^{-2n-1}. \label{eq:pra}
\end{equation}
\end{lem}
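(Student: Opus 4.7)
The plan is a union bound: since $R_\alpha^C \subseteq N_\alpha^C \cup Y_\alpha^C$, it suffices to bound each of $\mathbb{P}(N_\alpha^C)$ and $\mathbb{P}(Y_\alpha^C)$ by $\alpha^{-2n-1}/2$ for all $\alpha$ large enough. Both bounds are standard tail estimates: a Poisson-gap bound for $N_\alpha$ and a quantitative Brownian modulus of continuity for $Y_\alpha$. The structural reason they go through at this level of precision is that $\phi(\alpha)=e^{\sqrt{\log\alpha}}$ is sub-polynomial (slower than every $\alpha^{\epsilon}$) but super-polylogarithmic, so tails of the form $e^{-c\phi(\alpha)}$ or $e^{-c\phi(\alpha)^{2}}$ beat every polynomial factor in $\alpha$ for $\alpha$ large enough.

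For $N_\alpha^C$, I partition $[0,1]$ into $M=\lceil \alpha/\phi(\alpha)\rceil$ consecutive sub-intervals of length at most $\phi(\alpha)/\alpha$. If every sub-interval meets $\Lambda_\alpha$, then for any $t\in[0,1]$ the window $[t-\phi(\alpha)/\alpha,t+\phi(\alpha)/\alpha]$ contains at least one whole sub-interval, and hence some point of $\Lambda_\alpha$ (with $\{0,1\}\subset\Lambda_\alpha$ handling the boundary cases). A single sub-interval of length $\phi(\alpha)/\alpha$ is empty with probability at most $e^{-\phi(\alpha)}$, so a union bound gives $\mathbb{P}(N_\alpha^C)\leq M e^{-\phi(\alpha)} = O(\alpha e^{-\phi(\alpha)})$, which is super-polynomially small.

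For $Y_\alpha^C$, I would use a dyadic chaining. Set $\delta_k=2^{-k}$ and $k^\star=\lceil(4n+4)\log_2\alpha\rceil$. Partitioning $[0,1]$ into $2^k$ intervals of length $\delta_k$, any pair $t_1,t_2$ with $|t_1-t_2|\leq \delta_k$ lies in at most two adjacent such intervals, giving $M_{\delta_k}\leq 2\max_j \sup_{s,t\in I_j^{(k)}}|B(s)-B(t)|$. Standard Gaussian tail bounds (the reflection principle applied coordinate-wise, plus a union bound over coordinates) yield, for each $j$,
\[
\mathbb{P}\bigl(\sup_{s,t\in I_j^{(k)}}|B(s)-B(t)|\geq \lambda\bigr)\leq C_n\exp\bigl(-\lambda^{2}/(C_n'\delta_k)\bigr).
\]
Choosing $\lambda=\sqrt{\delta_k}\phi(\alpha)/4$ and union-bounding over the $2^k$ cells and over $0\leq k\leq k^\star$ gives
\[
\mathbb{P}\Bigl(\exists\, k\leq k^\star:\ M_{\delta_k}>\tfrac{1}{2}\sqrt{\delta_k}\phi(\alpha)\Bigr)\leq C_n\, 2^{k^\star+1}\exp\bigl(-\phi(\alpha)^{2}/C_n''\bigr),
\]
which is super-polynomially small. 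On the complement, monotonicity of $\delta\mapsto M_\delta$ combined with a factor-$\sqrt{2}$ interpolation between consecutive dyadic scales yields $M_\delta\leq \sqrt{\delta}\phi(\alpha)$ for all $\delta\in[\delta_{k^\star},1]$. For $\delta<\delta_{k^\star}$, monotonicity gives $M_\delta\leq M_{\delta_{k^\star}}\leq \tfrac{1}{2}\sqrt{\delta_{k^\star}}\phi(\alpha)$, and by the choice of $k^\star$ the last quantity is bounded by $\alpha^{-2n-1}$ for $\alpha$ large, which is absorbed by the additive slack in the definition of $Y_\alpha$.

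No step here is genuinely difficult; the only bit of care is the calibration of $k^\star$. I must truncate at a scale fine enough that sub-$\delta_{k^\star}$ fluctuations of $B$ fit inside the additive tolerance $\alpha^{-2n-1}$, yet coarse enough that the total space-time union bound (costing a factor $\sim 2^{k^\star}$) loses only a polynomial in $\alpha$. Both constraints are satisfiable with $k^\star=\Theta(\log\alpha)$ precisely because $\phi(\alpha)$ is sub-polynomial, which makes the residual Gaussian tail $\exp(-c\phi(\alpha)^{2})$ comfortably smaller than every polynomial bound of the form $\alpha^{-2n-1}$.
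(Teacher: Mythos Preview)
Your proof is correct and, for the $N_\alpha$ part, essentially identical to the paper's argument. For the $Y_\alpha$ part you take a somewhat different (and arguably cleaner) route: the paper splits into two regimes, first invoking a moment bound of Fischer--Nappo, $\mathbb{E}[M_\delta^p]\le K_p(\delta\log(1/\delta))^{p/2}$, with $p=4$ and $\delta=\alpha^{-6n-3}$ together with Markov's inequality to control the very small scales, and then using a single fixed grid of spacing $\approx\alpha^{-6n-4}$ together with Gaussian tails at level $\log\alpha$ (plus the small-scale bound via the triangle inequality) for the remaining $\delta$. Your dyadic chaining handles both regimes at once using only the reflection principle and a union bound over $O(\log\alpha)$ scales, so it is fully self-contained and avoids the external reference; the paper's version is a bit more modular if one already has modulus-of-continuity moment bounds available. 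In either approach the decisive point is the one you identify: $\phi(\alpha)$ is super-polylogarithmic, so tails of the form $\exp(-c\,\phi(\alpha))$ or $\exp(-c\,\phi(\alpha)^{2})$ beat the polynomial cost of the union bound.
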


Remark that for $\alpha$ larger than some universal constant, we
have the following implication:

\begin{equation}
R_{\alpha}\mbox{ holds}\Rightarrow\forall t\in[0,1],~\exists s\in\Lambda_{\alpha}\mbox{ such that }|B(s)-B(t)|\leq\frac{\phi(\alpha)^{2}}{\sqrt{\alpha}}.\label{eq:RWdense}
\end{equation}
Next, we formulate a geometric lemma which will allow us to relate between the
event $\CC_{\alpha,\gamma,\theta}(r,s)$ and the smoothness of $K$: 
\begin{lem} \label{lem:annoying}
For every $\pi>\kappa>0$ there exists $M_\kappa>0$ such that the following
holds: Let $W$ be a wedge whose tip contains the origin and whose
opening angle is $\pi-\kappa$. Let $P$ be a convex polytope contained
in $W$ whose distance from the origin is at most $s$. Then
there exist two facets $F_{1},F_{2}$ of $P$ with normal directions
$n_{1},n_{2}$ such that the angle between $n_{1}$ and $n_{2}$ is
at least $\tfrac{\kappa}{16}$ and such that the following holds:
let $L$ be the span of $n_{1},n_{2}$ and let $W'\subset L$ be the
wedge corresponding to $F_{1},F_{2}$ with tip $w'$, then the distance
between $w'$ and the projection of $F_{1}$ on $L$ is at most $M_\kappa s$. 
\end{lem}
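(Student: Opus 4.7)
My plan is to exhibit the two facets $F_1, F_2$ by analyzing the polytope $P$ at a distinguished base point $v^\ast \in P$, taken to be a closest point of $P$ to the origin, so that $|v^\ast| \leq s$. Since $W$ contains no neighborhood of the origin, $v^\ast$ lies on $\partial P$. I record two structural facts about the outward normal cone $N_{v^\ast} P$: first, $v^\ast/|v^\ast| \in N_{v^\ast} P$ because $v^\ast$ realizes $\min_{p \in P} |p|$; second, $N_{v^\ast} P \supset N_{v^\ast} W$ because $P \subset W$. The strategy is to pick both facets to contain $v^\ast$: when $F_1, F_2 \ni v^\ast$, their projections onto $L = \mathrm{span}(n_1,n_2)$ have affine spans both passing through $\pi_L(v^\ast)$, so the wedge tip equals $\pi_L(v^\ast) \in \pi_L(F_1)$ and the wedge-tip distance is $0$, trivializing the second conclusion for any $M_\kappa \geq 0$.

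For the angular separation I use a spherical pigeonhole on $N_{v^\ast} P$, which is the positive conical hull of the outward normals of the facets of $P$ containing $v^\ast$. The key claim is: if $N_{v^\ast} P$ contains a two-dimensional sub-cone of angular width at least $\omega$, then two of its generators (facet normals) are at angle at least $\omega/2$. Indeed, were all generators to lie inside a spherical cap of radius $d < \omega/2$ around some $n^\ast$, the computation $\langle n^\ast, \sum_i \lambda_i n_{F_i}\rangle \geq \cos(d)\sum_i \lambda_i \geq \cos(d)\,|\sum_i \lambda_i n_{F_i}|$ would force every normalized positive combination to remain in the same cap, which is impossible once the combination produces two rays at angle $\omega$. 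It therefore suffices to locate a 2D sub-cone of width $\geq \kappa/8$ in $N_{v^\ast} P$. When $v^\ast$ lies on the $(n-2)$-dimensional tip $L_0^\perp$ of $W$, the inclusion $N_{v^\ast} P \supset N_{v^\ast} W = \mathrm{cone}(m_1, m_2)$ already provides width $\kappa$ in $L_0 = \mathrm{span}(m_1,m_2)$. When $v^\ast$ lies on exactly one bounding face $\{\langle \cdot, m_i \rangle = 0\}$ of $W$, the vectors $m_i$ and $v^\ast/|v^\ast|$ both belong to $N_{v^\ast} P$ and are orthogonal (since $\langle v^\ast, m_i\rangle = 0$), so they span a cone of width $\pi/2$ and yield facet normals at angle $\geq \pi/4 \geq \kappa/16$.

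The delicate case, which I expect to be the main obstacle, is $v^\ast \in \mathrm{int}(W)$: here $N_{v^\ast} W = \{0\}$, so only $v^\ast/|v^\ast|$ is a priori in $N_{v^\ast} P$, and the preceding argument produces no 2D sub-cone. The plan is to descend to the two-dimensional picture by projecting $P$ onto $L_0$, obtaining a planar polygon $\tilde P \subset \tilde W$ inside a wedge of opening angle $\pi - \kappa$ with a point within $s$ of the origin. A two-dimensional variant of the above analysis, applied on $\tilde P$, produces two edges near the origin whose outward normals (in $L_0$) are separated by a definite fraction of $\kappa$; lifting these edges back to faces of $P$ and then enlarging to facets with controlled out-of-plane tilt gives a pair whose $\mathbb{R}^n$-angular separation is at least $\kappa/16$, the numerical factor absorbing the projection-to-ambient loss together with the pigeonhole factor. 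The two lifted facets need not share a common vertex, but each can be arranged to contain a point within $O_\kappa(s)$ of the other, so that the wedge tip $w'$ ends up within distance of order $s/\sin(\kappa/16)$ of $\pi_L(F_1)$, producing the constant $M_\kappa$. The technical crux is precisely this lift-and-control step in dimensions $n > 2$, where facet normals of $P$ can tilt substantially out of $L_0$ and care is needed to keep both the angular separation and the distance to $v^\ast$ under control simultaneously.
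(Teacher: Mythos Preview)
Your normal-cone approach is genuinely different from the paper's, and in Cases 1 and 2 (when the closest point $v^\ast$ lies on $\partial W$) it works cleanly: the wedge normals, or one wedge normal together with $v^\ast/|v^\ast|$, furnish a wide 2D sub-cone of $N_{v^\ast} P$, and your spherical pigeonhole (which is correct once one takes $n^\ast$ to be one of the generators) extracts two facet normals at the required angle, both facets passing through $v^\ast$ so that the wedge-tip distance is zero.

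The difficulty is that Case 3 is the generic case: there is no reason for the point of $P$ closest to the origin to touch $\partial W$. There your sketch has two real gaps. First, the ``two-dimensional variant of the above analysis'' is not actually available: in the projected picture the closest point of $\tilde P$ to the origin will again typically lie in $\mathrm{int}(\tilde W)$, so the normal-cone argument yields only the single direction $\tilde v^\ast/|\tilde v^\ast|$ and no wide sub-cone; a separate 2D argument is needed even to locate the two edges. Second, and more seriously, the lifting step is unjustified. An edge of $\tilde P = \pi_{L_0}(P)$ with outward normal $\tilde n \in L_0$ corresponds to the face $\{p\in P:\langle p,\tilde n\rangle=\max\}$, which need not be a facet; any facet containing it has a normal somewhere in the normal cone of that face, and nothing prevents this normal from tilting almost entirely out of $L_0$. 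You correctly flag this as the ``technical crux'' but give no mechanism to control the tilt, and projection simply does not preserve the edge-to-facet correspondence that would make such control possible.

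The paper sidesteps exactly this issue by \emph{slicing} rather than projecting. It takes a vertex $v$ of $P$ near the tip, picks any facet $F\ni v$ with inner normal $u$, shows one wedge normal $u_1$ makes an angle in $[\kappa/2,\pi-\kappa/2]$ with $u$, and intersects $P$ with the affine 2-plane $H$ through $v$ spanned by $u,u_1$. The key point is that each edge of $P'=P\cap H$ is $F'\cap H$ for an honest facet $F'$ of $P$, so no tilt control is needed. A short derivative argument on the height $t\mapsto\langle j(t),u_1\rangle\geq 0$ along $\partial P'$ (it starts small at $v$ and, by the angle condition on $u$, initially decreases at a rate bounded away from zero, hence must turn within arclength $O_\kappa(s)$) locates a second edge, and a final pigeonhole among the intervening facets produces a pair with normals at angle $\geq \kappa/16$ and with the required distance bound. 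If you want to complete your approach, replacing the projection in Case 3 by such a slice is the natural fix.
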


The proof of this lemma is postponed to the end of the section. By slight abuse of notation, we now define
$$
\CC_{\alpha, \kappa}(r,s) := \CC_{\alpha, \gamma(\alpha,\kappa), \kappa/16}(r,s)
$$
where
$$
\gamma(\alpha, \kappa)=M_\kappa \frac{\varphi^{2}(\alpha)}{\sqrt{\alpha}}
$$
and where $M_\kappa$ is the constant given by the above lemma. As a direct corollary of this lemma, we have the following proposition: 
\begin{prop}
	For every $\kappa>0$ there exist constants $C_\kappa,M_\kappa>0$ such that the
	following holds almost surely: For any $\alpha>C_\kappa$, suppose that there exists $x\in\partial K$
	such that $K$ is contained in a wedge of angle $\kappa$ whose tip
	is at $x$ and that $R_{\alpha}$ holds. Then there
	exist $(r,s)\in\triangle_{n}\times\triangle_{n}$ such that $F_r, F_s$ are facets on the boundary of $K_\alpha$ and such that the event 
	$\CC_{\alpha, \kappa}(r,s)$ holds.
\end{prop}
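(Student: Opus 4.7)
The plan is to combine the density estimate (\ref{eq:RWdense}) with the geometric Lemma \ref{lem:annoying}. First, on the event $R_\alpha$ (for $\alpha$ larger than some constant $C_\kappa$), the implication (\ref{eq:RWdense}) yields a $t_0 \in \Lambda_\alpha$ with $|B(t_0) - x| \leq \phi(\alpha)^2/\sqrt{\alpha}$. Since $B(t_0)$ is a vertex of $K_\alpha$ and $K_\alpha \subseteq K$ sits inside the hypothesized wedge $W$ with tip at $x$, we conclude that $K_\alpha$ is a convex polytope contained in $W$ whose distance from the tip is at most $s := \phi(\alpha)^2/\sqrt{\alpha}$.

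After translating so that $x$ becomes the origin, I would apply Lemma \ref{lem:annoying} to the polytope $P = K_\alpha$ and the wedge $W$. This yields two facets $F_1, F_2$ of $K_\alpha$ whose normals $n_1, n_2$ make an angle of at least $\kappa/16$, and such that, in the two-dimensional plane $L = \mathrm{span}(n_1, n_2)$, the tip $w'$ of the associated wedge $W' \subset L$ lies within distance $M_\kappa s = \gamma(\alpha, \kappa)$ from the projection of $F_1$ onto $L$; by the symmetric role of the two facets in the statement of the lemma, the analogous bound also holds for $F_2$. Every facet of $K_\alpha$ is of the form $F_r$ for some $r \in \Delta_n$ whose coordinates lie in $\Lambda_\alpha$, so we obtain $(r,s) \in \Delta_n \times \Delta_n$ with $F_r = F_1$ and $F_s = F_2$.

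It then remains to verify the two conditions defining $\CC_{\alpha,\kappa}(r,s) = \CC_{\alpha, \gamma(\alpha,\kappa), \kappa/16}(r,s)$. The angle condition $\theta(r,s) \geq \kappa/16$ is immediate. For the distance condition, the key observation is that the direction of the $(n-2)$-dimensional affine subspace $L(r,s) = \mathrm{aff}(F_r) \cap \mathrm{aff}(F_s)$ is precisely $L^\perp$, so $L(r,s)$ is the unique translate of $L^\perp$ that projects onto the single point $w' \in L$. Consequently $\mathrm{dist}(y, L(r,s)) = |\pi_L(y) - w'|$ for every $y \in \mathbb{R}^n$, and the lemma's bound translates directly to $\mathrm{dist}(F_r \cup F_s, L(r,s)) \leq \gamma(\alpha, \kappa)$, as required.

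I expect the main point requiring care to be the geometric dictionary between the two-dimensional picture used in Lemma \ref{lem:annoying}, phrased in terms of projections onto the plane spanned by the two normals, and the definition of $\CC_{\alpha, \gamma, \theta}$, which refers instead to the codimension-$2$ intersection $L(r,s)$. Once the identification of $w'$ with the projection of $L(r,s)$ onto $L$ is made explicit, the argument reduces to routine geometry.
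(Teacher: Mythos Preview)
Your proof is correct and follows essentially the same approach as the paper: apply implication (\ref{eq:RWdense}) to get a point of $K_\alpha$ within $\phi(\alpha)^2/\sqrt{\alpha}$ of the tip, then invoke Lemma \ref{lem:annoying}. You supply more detail than the paper does, in particular the explicit identification of $L(r,s)$ with the preimage of $w'$ under the projection onto $L=\mathrm{span}(n_r,n_s)$, which makes the passage from the lemma's conclusion to the event $\CC_{\alpha,\kappa}(r,s)$ transparent. One small remark: your appeal to ``the symmetric role of the two facets'' to get the bound for $F_2$ is not literally supported by the lemma as stated (it only names $F_1$), but it is also unnecessary, since $\mathrm{dist}(F_r\cup F_s,L(r,s))$ is the minimum of the two distances and the bound for $F_1$ alone already suffices.
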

\begin{proof}
Using the implication \eqref{eq:RWdense}, we can invoke Lemma \ref{lem:annoying} with $s=\frac{\varphi^{2}(\alpha)}{\sqrt{\alpha}}$ for the wedge spanned by the supporting hyperplanes to $K$ at $x$, and with the polytope being $K_\alpha$. The existence of the facets $F_1,F_2$ amounts to the required events for some $r,s$.
\end{proof}

A consequence of the above proposition is that whenever $K$ is not smooth, then necessarily, for all $\alpha$ larger than some constant, either the event $R_\alpha^C$ holds, or otherwise, one has that for some $\kappa > 0$, 
$$
\int_{\Delta_{n}\times\Delta_{n}}\mathbf{1}_{\CC_{\alpha,\kappa}(r,s)}dq_{\alpha}(r)dq_{\alpha}(s) \geq 1.
$$
Using Markov's inequality, our main theorem will thus follow if we prove that for every $\kappa>0$, one has 
$$
\mathbb{P}(R_\alpha^C) + \mathbb{E}\left[\int_{\Delta_{n}\times\Delta_{n}}\mathbf{1}_{\CC_{\alpha,\kappa}(r,s)}dq_{\alpha}(r)dq_{\alpha}(s)\right]\xrightarrow{\alpha\to\infty}0.\label{eq:enoughtoshow1}
$$
We are now in position to apply Equation \eqref{eq:formula1}, according to which it is enough to show that

\begin{equation}\label{eq:enoughtoshow3}
\mathbb{P}(R_\alpha^C) + \alpha^{2n}\int_{\Delta_{n}\times\Delta_{n}}\mathbb{E}\left[\mathbf{1}_{\CC_{\alpha,\kappa}(r,s)}\mathbf{1}_{E_{\alpha}(r)\cap E_{\alpha}(s)}\right]drds \xrightarrow{\alpha\to\infty}0.
\end{equation}
At this point, it will be more convenient to work with the following events:
\[
\tilde{E}_{\alpha}(r):=\left\{ \forall t\in[0,1],\langle B(t),n_{r}\rangle\leq\langle B(r_{1}),n_{r}\rangle + \frac{\phi(\alpha)^{2}}{\sqrt{\alpha}}\right\} .
\]
This event is almost similar to $E_{\alpha}(r)$ in the following
sense: Equation (\ref{eq:RWdense}) implies that 
\begin{equation}
R_{\alpha}\cap E_{\alpha}(r)\subset\tilde{E}_{\alpha}(r).\label{eq:expandedfacet}
\end{equation}
Finally, we consider the event 
\[
\tilde{\CC}_{\alpha,\kappa}(r,s):=\tilde{E}_{\alpha}(r)\cap\tilde{E}_{\alpha}(s)\cap \CC_{\alpha, \kappa}
\]
Using (\ref{eq:expandedfacet}), we now have that Equation (\ref{eq:enoughtoshow3}) follows from 
\begin{equation}
\alpha^{2n} \mathbb{P}\left(R_{\alpha}^{C}\right)+\alpha^{2n}\int_{\Delta_{n}\times\Delta_{n}}\mathbb{P}\left(\tilde{\CC}_{\alpha,\kappa}(r,s)\right)drds\xrightarrow{\alpha\to\infty}0.\label{eq:enoughtoshow2}
\end{equation}
The proof of the main theorem now boils down to proving the last equation.
Lemma \eqref{lem:Ralpha} implies that $\alpha^{2n} \mathbb{P}\left(R_{\alpha}^{C}\right)\xrightarrow{\alpha\to\infty}0$
, and the rest of the paper is devoted to estimating the integral.
The content of this section is summarized by the following statement: 
\begin{prop} \label{prop:reduction}
	For every dimension $n$, one has
	\[
	\mathbb{P}(K\mbox{ is not smooth})\leq\sup_{\kappa>0}\limsup_{\alpha\to\infty}\alpha^{2n}\int_{\Delta_{n}\times\Delta_{n}}\mathbb{P}\left(\tilde{\CC}_{\alpha,\kappa}(r,s)\right)drds.
	\]
\end{prop}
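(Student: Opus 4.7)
The plan is to assemble the pieces already developed in this section into a chain of inequalities. The starting observation is that non-smoothness of $K$ decomposes as the monotone union
\[
\{K\text{ is not smooth}\}=\bigcup_{0<\kappa<\pi}\bigl\{\exists\, x\in\partial K\text{ with }K\text{ contained in a wedge of angle }\kappa\text{ at }x\bigr\},
\]
so by continuity of measure $\mathbb{P}(K\text{ is not smooth})=\sup_{\kappa}p_{\kappa}$, where $p_{\kappa}$ denotes the probability of the $\kappa$-event on the right. It therefore suffices to bound $p_{\kappa}$ for each fixed $\kappa$ and pass to the supremum at the end.

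For $\alpha>C_\kappa$, the previous proposition gives the inclusion that on $R_\alpha$, the $\kappa$-wedge condition forces $\int_{\Delta_{n}\times\Delta_{n}}\mathbf{1}_{\CC_{\alpha,\kappa}(r,s)}\,dq_\alpha(r)dq_\alpha(s)\geq 1$. Markov's inequality then yields
\[
p_{\kappa} \leq \mathbb{P}(R_\alpha^C)+\mathbb{E}\!\left[\int_{\Delta_{n}\times\Delta_{n}}\mathbf{1}_{\CC_{\alpha,\kappa}(r,s)}\,dq_\alpha(r)dq_\alpha(s)\right].
\]

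Since $\CC_{\alpha,\kappa}(r,s)$ is $\mathcal{F}_B$-measurable, I would then apply the generalized Campbell formula \eqref{eq:formula1} with $f=\mathbf{1}_{\CC_{\alpha,\kappa}(r,s)}$ to rewrite the second term as $\alpha^{2n}\!\int\mathbb{E}[\mathbf{1}_{\CC_{\alpha,\kappa}(r,s)}\mathbf{1}_{E_\alpha(r)\cap E_\alpha(s)}]\,drds$. Splitting the indicator $\mathbf{1}_{E_\alpha(r)\cap E_\alpha(s)}\leq \mathbf{1}_{R_\alpha^C}+\mathbf{1}_{R_\alpha\cap E_\alpha(r)\cap E_\alpha(s)}$ and invoking \eqref{eq:expandedfacet} on the latter turns its integrand into $\mathbf{1}_{\tilde{\CC}_{\alpha,\kappa}(r,s)}$, producing
\[
p_{\kappa} \leq \bigl(1+\alpha^{2n}|\Delta_n|^2\bigr)\mathbb{P}(R_\alpha^C)+\alpha^{2n}\!\int_{\Delta_{n}\times\Delta_{n}}\mathbb{P}\bigl(\tilde{\CC}_{\alpha,\kappa}(r,s)\bigr)\,drds.
\]

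Finally, Lemma \ref{lem:Ralpha} gives $\alpha^{2n}\mathbb{P}(R_\alpha^C)\leq \alpha^{-1}\to 0$, so taking $\limsup_{\alpha\to\infty}$ bounds $p_\kappa$ by the inner limsup in the proposition; passing to $\sup_\kappa$ completes the proof. There is no real obstacle at this stage — the proposition is a bookkeeping consolidation of the arguments made earlier in the section, with the only nontrivial conceptual input being the $\mathcal{F}_B$-measurability of $\CC_{\alpha,\kappa}$ that permits the application of \eqref{eq:formula1}. The genuinely hard work, namely estimating $\limsup_{\alpha\to\infty}\alpha^{2n}\int\mathbb{P}(\tilde{\CC}_{\alpha,\kappa}(r,s))\,drds$, is deferred to the later sections.
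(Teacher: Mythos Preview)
Your proposal is correct and follows essentially the same approach as the paper: the decomposition of non-smoothness over $\kappa$, the use of the preceding proposition with Markov's inequality, the application of \eqref{eq:formula1} via the $\mathcal{F}_B$-measurability of $\CC_{\alpha,\kappa}$, the passage from $E_\alpha$ to $\tilde E_\alpha$ through \eqref{eq:expandedfacet} at the cost of an extra $\alpha^{2n}\mathbb{P}(R_\alpha^C)$ term, and finally killing that term with Lemma~\ref{lem:Ralpha}. Your bookkeeping is in fact slightly cleaner than the paper's informal narrative leading to \eqref{eq:enoughtoshow2}.
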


The rest of the proof is devoted to finding an upper bound for the quantities on the right hand side.

\begin{proof}[Proof of Lemma \ref{lem:annoying}]
	Without loss of generality, we may assume that $\kappa\leq\frac{\pi}{2}$
	(indeed, otherwise we may consider a wedge with a bigger opening angle
	that contains the wedge $W$). Let $u_{1},u_{2}$ be the inner normal directions to the
	facets of $W$. Define $h=\frac{u_1+u_2}{|u_1+u_2|}$. Since, by assumption, $P$ contains a point whose 
	distance to the origin is at most $s$, then $P$ must contain a vertex $v$ such that $\langle v, h \rangle \leq s$. Since
	$v$ is inside the triangular prism defined as the intersection of $W$ with $\{x; \langle x, h \rangle \leq s \}$, the distance between $v$ and the tip
	of the wedge $W$ is bounded by $s' := \frac{s}{\sin(\kappa/2)}$.
	
	Let $F$ be an $(n-1)$-dimensional facet containing the vertex $v$. Denote by $u$ the inner normal
	to $P$ at $F$.  We claim that there exists $i\in\{1,2\}$ such that
	the angle between $u$ and $u_{i}$ is within the range $[\tfrac{\kappa}{2},\pi-\tfrac{\kappa}{2}]$.
	Indeed, since the angle between $u_{1}$ and $u_{2}$ is $\kappa$
	which is at most $\pi/2$, by the triangle inequality it cannot be
	the case that both vectors have angle less than $\kappa/2$ with either
	the vector $u$ or with its antipodal. Assume without loss of generality
	that the vector $u_{1}$ satisfies the above, hence 
	\begin{equation}
	\arccos(\langle u,u_{1}\rangle)\in[\tfrac{\kappa}{2},\pi-\tfrac{\kappa}{2}]\label{eq:assumpangle}
	\end{equation}
	(otherwise we may switch between $u_{1}$ and $u_{2}$).
	
	Consider the plane $H$ spanned by $u$ and $u_{1}$, passing at $v$.
	Set $P'=P\cap H$. The set $P'$ is a (possibly degenerate) $2$-dimensional
	convex polygon. First suppose that $P'$ has an empty interior. In
	this case, there must exist another facet $F'$ of $P$ which contains
	the vertex $v$ and whose normal direction has a nonpositive scalar
	product with $u$. In this case the lemma is complete by considering
	the facets $F,F'$. Otherwise, we may assume that $P'$ is a convex
	polygon with nonempty interior. Denote its edges by $f_{1},f_{2},...$
	in a manner that $f_{i}$ and $f_{i+1}$ share a vertex and such that
	$f_{1}$ contains the vertex $v$. Each edge $f_{i}$ corresponds
	to a facet $F_{i}$ of $P$, whose normal we denote by $n_{i}$.
	
	For all $t\in\mathbb{R}$ let $j(t)$ be the curve starting at $v$
	going along $P'$ in a speed parametrized by arclength, the direction
	of which will be chosen promptly out of the two possible directions.
	We define 
	\[
	g(t):=\langle j(t),u_{1}\rangle.
	\]
	The assumption \eqref{eq:assumpangle} implies that for a suitable
	choice of direction one has that 
	\begin{equation}
	\min(g'_{+}(0),g'_{-}(0))\leq-\sin(\kappa/2)\label{eq:derg1}
	\end{equation}
	(here $g'_{+}(0),g_{-}'(0)$ denote the right and left-derivatives
	of $g$ at $0$). Define $\ell=\frac{s'}{\sin(\kappa/4)}$. Since $g(0)\leq s$
	and $g(t)\geq0$ for all $t$, by the fact that $g$ is piecewise-differentiable
	it follows that there exists $t'\leq\ell$ such that 
	\[
	g'(t')\geq-\sin(\kappa/4).
	\]
	The above inequality combined with \eqref{eq:derg1} imply that the
	angle between $f_{1}$ and $f_{i}$ is at least $\kappa/4$, where
	$f_{i}$ is the facet containing the point $j(t')$. Since $u$ parallel
	to $H$, it follows that the angle between $u$ and $n_{i}$ is at
	least $\kappa/4$. Note also that the distance between $F$ and $F_{i}$
	is by definition at most $\frac{s'}{\sin(\kappa/4)}$ and the same
	is true for the distance between $F_{k},F_{k'}$ for all $1\leq k<k'\leq i$.
	
	At this point, the lemma will be concluded given that we find two
	indices $1\leq k<k'\leq i$ such that at least one of the following
	holds: (i) the angle between $n_{k}$ and $n_{k'}$ is in the rangle
	$[\tfrac{\kappa}{8},\pi-\tfrac{\kappa}{8}]$ or (ii) we have $k'=k+1$
	and the angle between $n_{k}$ and $n_{k'}$ is at least $\tfrac{\kappa}{8}$.
	This would be enough to complete the proof since if we consider the
	wedge $W'$ spanned by the two facets $F_{k},F_{k'}$ then we have
	that the angle between the facets is at least $\kappa/8$ in both
	cases, moreover if $W'$ is the wedge spanned by those facets with
	tip $w'$, then in case (ii) the distance of both facets to the tip
	is $0$ and in case (i) the distance of each facet to the tip is within
	factor $\frac{1}{\sin(\kappa/8)}$ of the distance between the facets.
	
	Suppose now that case (ii) does not hold, hence that the angle between
	$F_{k}$ and $F_{k+1}$ is at most $\kappa/8$ for all $1\leq k\leq i-1$.
	In this case either there exists $1\leq k'\leq i$ such that the angle
	between $F_{k'}$ and the plane $H$ is at least $\kappa/8$ (in which
	case we can take $k=1$ and case (i) follows), or otherwise it must
	be that the angles between $f_{k}$ and $f_{k+1}$ are at most $\kappa/4$
	for all $1\leq k\leq i-1$. Since the angle between $f_{1}$ and $f_{i}$
	is at least $\kappa/4$ and since $\kappa\leq\pi/2$, it follows by
	a continuity argument that there exists some $k'\leq i$ such that
	the angle between $f_{1}$ and $f_{k}$ within the range $[\tfrac{\kappa}{4},\pi-\tfrac{\kappa}{4}]$
	and case (i) holds again. This completes the proof. 
\end{proof}

\section{An upper bound on the probability for discordant facets}

For $(r,s)\in\Delta_{n}\times\Delta_{n}$, we denote by $\mathbf{t}(r,s)=(t_{1},...,t_{2n})$
the points of $r\cup s$ in increasing order. 

The main proposition of this section is the following one: 
\begin{prop} \label{prop:tildec}
	For every dimension $n$ and every $\kappa>0$ there exists $C>0$
	such that the following holds, for all $\alpha>C$: Fix $(r,s)\in\Delta_{n}\times\Delta_{n}$.
	Denote $(t_{1},...,t_{2n})=\mathbf{t}(r,s)$. Then one has, 
	\[
	\mathbb{P}\left(\tilde{\CC}_{\alpha,\kappa}(r,s)\right)\leq\alpha^{-2n-1}+\alpha^{-2n -\kappa/(16000n)}\frac{1}{\sqrt{t_{1}(1-t_{2n})}}\prod_{i=2}^{2n}\frac{1}{t_{i}-t_{i-1}}.
	\]
\end{prop}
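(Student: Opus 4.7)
The plan is to project the Brownian motion onto the (random) $2$-dimensional plane $V = \mathrm{span}(n_r, n_s)$ and reduce the estimate to a $2$D wedge computation. I first condition on the $2n$ positions $B(t_1), \ldots, B(t_{2n})$; this freezes $V$, $n_r$, $n_s$, and the levels $c_r = \langle B(r_1), n_r\rangle$, $c_s = \langle B(s_1), n_s\rangle$, and reduces the event $\tilde{E}_\alpha(r) \cap \tilde{E}_\alpha(s)$ to the requirement that the planar projection $B_V$ stays, for all $t\in[0,1]$, in the $2$D wedge $W^+ := \{x \in V : \langle x, n_r\rangle \leq c_r + \delta,\; \langle x, n_s\rangle \leq c_s + \delta\}$, where $\delta := \phi(\alpha)^2/\sqrt\alpha$. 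On the event $\CC_{\alpha,\kappa}$, the opening angle of $W^+$ equals $\beta = \pi - \theta(r,s) \leq \pi - \kappa/16$, and each of the $2n$ projected values $B_V(t_i)$ lies on one of the two bounding rays of $W^+$ within distance $O(\gamma(\alpha,\kappa) + \delta) = \alpha^{-1/2+o(1)}$ of the tip.

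Conditional on the positions, $B_V$ decomposes into $2n+1$ independent $2$D Brownian bridges on the subintervals $[t_{i-1}, t_i]$ (with the convention $t_0 = 0$, $t_{2n+1} = 1$), and it suffices to bound the probability that each such bridge stays in $W^+$. I would use the classical Dirichlet heat kernel asymptotics for a planar wedge of angle $\beta$: when both endpoints lie at polar distances $\rho_{i-1}, \rho_i$ from the tip with $\rho \ll \sqrt{t_i - t_{i-1}}$, the ratio of the Dirichlet to the free heat kernel is of order $(\rho_{i-1}\rho_i/(t_i - t_{i-1}))^{\pi/\beta}$ up to bounded angular factors. Compared to the half-plane bound of order $h_{i-1}h_i/(t_i - t_{i-1})$ (with $h$ the perpendicular distance to the nearer bounding line), this supplies an additional \emph{wedge gain} factor $(\rho_{i-1}\rho_i/(t_i - t_{i-1}))^{\pi/\beta - 1}$. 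Since $\pi/\beta - 1 \geq \kappa/(16\pi)$ on $\CC_{\alpha,\kappa}$ and the relevant polar distances satisfy $\rho \leq \gamma + \delta = \alpha^{-1/2+o(1)}$, harvesting this gain on an appropriate subinterval produces the promised $\alpha^{-\kappa/(16000n)}$ factor, the constant $16000n$ being conservative and absorbing logarithmic and multiplicative losses.

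Assembling the per-interval estimates and integrating against the Gaussian density of the endpoint positions produces the remaining structure: the $2n-1$ \emph{interior} boundary-to-boundary intervals each contribute a half-plane factor of order $\delta^2/(t_i - t_{i-1})$, yielding together the $\delta^{4n} \sim \alpha^{-2n+o(1)}$ prefactor and the product $\prod_{i=2}^{2n} 1/(t_i - t_{i-1})$; the first and last subintervals $[0, t_1]$ and $[t_{2n}, 1]$, whose outer endpoints $B_V(0) = 0$ and $B_V(1)$ are free, contribute the factors $1/\sqrt{t_1}$ and $1/\sqrt{1-t_{2n}}$ after integrating out these free endpoints. The additive term $\alpha^{-2n-1}$ absorbs the degenerate configurations (very small $\Delta t_i$, or the regime $\rho \ll \sqrt{\Delta t}$ failing) where the wedge estimate cannot be applied cleanly; on those events, one reverts to crude one-sided bounds with only polynomial loss. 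The main obstacle will be controlling the random wedge geometry (whose tip and orientation are nontrivial functions of the endpoints) uniformly and verifying that the wedge-gain regime applies to at least one subinterval in every non-degenerate configuration, so that the exponent $\kappa/(16000n)$ can indeed be extracted.
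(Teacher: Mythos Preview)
Your overall strategy coincides with the paper's: project onto $V=\mathrm{span}(n_r,n_s)$, condition on $B(t_1),\dots,B(t_{2n})$, split into $2n+1$ independent planar bridges, harvest half-plane factors of order $\delta^2/(t_i-t_{i-1})$ on interior intervals and $\delta/\sqrt{\Delta t}$ on the two edge intervals, and extract the extra $\alpha^{-c\kappa}$ from a genuine wedge estimate on one distinguished subinterval. The paper obtains its bridge-in-wedge bound not via Dirichlet heat-kernel asymptotics but by an iterated application of Spitzer's one-sided estimate along a geometric sequence of times (Lemma~\ref{lem:technicalwedge}); either route is viable.

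There is, however, a real gap. Your assertion that ``each of the $2n$ projected values $B_V(t_i)$ lies \dots\ within distance $O(\gamma+\delta)=\alpha^{-1/2+o(1)}$ of the tip'' is false, and this is precisely the heart of the matter. The event $\CC_{\alpha,\kappa}$ requires only $\mathrm{dist}(F_r\cup F_s,L(r,s))\le\gamma$, i.e.\ \emph{some} point of $F_r\cup F_s$ projects within $\gamma$ of $w_0$; the other $B_V(t_i)$ sit on the bounding rays at polar distances $\rho_i$ that are a priori uncontrolled. Hence your wedge-gain factor $(\rho_{i-1}\rho_i/\Delta t_i)^{\pi/\beta-1}$ can be of order~$1$ on every subinterval simultaneously (the generic situation under $Y_\alpha$ is $\rho_i^2\asymp\Delta t_i$, since $|b_{i+1}-b_i|\lesssim\phi(\alpha)\sqrt{\Delta t_i}$). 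The paper closes this gap with a separate pigeonhole argument (Lemma~\ref{lem:specialindex}): starting from the single vertex that is $\gamma$-close to $w_0$ and using the oscillation bound from $Y_\alpha$, one shows that if \emph{no} index $j$ satisfied $\Delta t_j\ge\alpha^{1/(10n)}\max\bigl(\min(|Pb_j-w_0|,|Pb_{j+1}-w_0|)^2,\,\alpha^{-1}\bigr)$, then the $2n+1$ increments would be too small to sum to~$1$. It is only on this special interval that the wedge estimate (rather than a half-plane estimate) is invoked, and the $\alpha^{1/(10n)}$ separation between $\Delta t_j$ and the squared tip-distance is what becomes $\alpha^{-\kappa/(16000n)}$. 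You flag this as ``the main obstacle'' in your last sentence, but it is a genuine lemma with its own proof, not bookkeeping, and without it the argument does not close.
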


By combining this bound with Proposition \ref{prop:reduction}, the proof of our main theorem boils down to estimating an explicit integral over $\Delta_n \times \Delta_n$. \\

Throughout the section, we fix $\kappa > 0$ and $(r,s)\in\Delta_{n}\times\Delta_{n}$
and the corresponding times $(t_{1},...,t_{2n})=\mathbf{t}(r,s)$. 
We also fix points $b_{0}=0$, $b_{1},...,b_{2n+1}\in\mathbb{R}^{n}$,
and consider the event 
\begin{equation}
S:=\bigl\{ B(t_{i})=b_{i},~~\forall1\leq i\leq2n+1\bigr\}.\label{eq:defS}
\end{equation}
Consider the following two properties: 
\begin{equation} \label{eq:prop1}
|b_{i+1}-b_{i}|\leq\phi(\alpha)\sqrt{t_{i+1}-t_{i}}+\alpha^{-2n-1}.
\end{equation}
\begin{equation} \label{eq:prop2}
\left\{ \theta(r,s)\geq\kappa/16\right\} \cap\{\mathrm{dist}(F_{r}\cup F_{s},L(r,s))\leq\gamma(\alpha,\kappa)\}.
\end{equation}

Our goal will be to replace the event $\tilde{\CC}_{\alpha,\kappa}(r,s)\cap R_{\alpha}$
with an intersection of events which are independent upon conditioning on $S$. To that end,
for all $i$, $0\leq i\leq2n$, we write 
\begin{equation}\label{eq:defHi}
H_{i}:=\Bigl\{\forall t\in[t_{i},t_{i+1}],~~\langle B(t),n_{r}\rangle\leq\langle B(r_{1}),n_{r}\rangle+\tfrac{\phi(\alpha)^{2}}{\sqrt{\alpha}} \mbox{ and }\langle B(t),n_{s}\rangle\leq\langle B(s_{1}),n_{s}\rangle+\tfrac{\phi(\alpha)^{2}}{\sqrt{\alpha}} \Bigr\},
\end{equation}
with the convention $t_{0}=0$ and $t_{2n+1}=1$. Remark that
$$
\tilde{E}_{\alpha}(r) \cap \tilde{E}_{\alpha}(s) = \bigcap_{i=0}^{2n} H_i,
$$
and by the representation theorem for the Brownian bridge, we have that the events $H_{i}$ are independent conditioned on $S$. Thus,
\begin{align}\label{HiProd}
\left . \mathbb{P}\left(\tilde{E}_{\alpha}(r) \cap \tilde{E}_{\alpha}(s) \cap R_{\alpha} \right |S\right) ~& \leq \left . \mathbb{P}\left( \tilde{E}_{\alpha}(r) \cap \tilde{E}_{\alpha}(s)\cap \left (\bigcap_{i=0}^{2n}R_{\alpha}[t_{i},t_{i+1}] \right ) \right |S\right) \\
& = \left . \mathbb{P}\left(\bigcap_{i=0}^{2n} \left (H_i \cap  R_{\alpha}[t_{i},t_{i+1}]\right ) \right |S\right) \nonumber \\
& = \prod_{i=0}^{2n}\mathbb{P} \left . \bigl (H_{i} \cap R_{\alpha}[t_{i},t_{i+1}] \right |S \bigr ). \nonumber
\end{align}
The following lemma shows that properties \eqref{eq:prop1} and \eqref{eq:prop2} imply
the existence of a ``special'' interval $\left[t_{j},t_{j+1}\right]$
among $\left[t_{0},t_{1}\right],\left[t_{1},t_{2}\right],...,\left[t_{2n},t_{2n+1}\right]$, which roughly has the property that the starting point of associated Brownian bridge has distance to the tip of the wedge which is much smaller than the square root of the length of the time interval. For this interval, we will be able to derive an improved bound on $\mathbb{P}(H_{j}|S)$ in the next section.

\begin{lem} \label{lem:specialindex}
For every $M>0$ there exists $C>0$ such that for all $\alpha > C$, the following holds. Let $t_0=0, t_{2n+1}=1$ and $0 < t_1,...,t_{2n} < 1$. Let $b_0 = (0,0)$ and, for all $1 \leq i \leq 2n+1$, let $b_i \in \mathbb{R}^2$ so that the condition \eqref{eq:prop1} holds. Let $w_0 \in \mathbb{R}^2$, and suppose that there exists an index $0 \leq j_0 \leq 2n+1$ for which $|b_{j_0} - w_0| < M \frac{\phi(\alpha)^{2}}{\sqrt{\alpha}}$. Then there exists an index $0 \leq j \leq 2n$ such that 
\begin{equation}
t_{j+1}-t_{j} \geq \alpha^{1/(10n)}\max\left( \min(|b_j-w_{0}|, |b_{j+1}-w_0|)^{2},\frac{1}{\alpha}\right).
\end{equation}
\end{lem}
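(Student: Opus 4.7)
The plan is to argue by contradiction. Write $d_i := |b_i - w_0|$ for $0 \le i \le 2n+1$ and $\delta_i := t_{i+1} - t_i$ for $0 \le i \le 2n$. Assume the conclusion fails, so that for every $j$,
$$
\delta_j \;<\; \alpha^{1/(10n)} \max\!\left(\min(d_j, d_{j+1})^2,\; \tfrac{1}{\alpha}\right).
$$
Set $A := \phi(\alpha)\,\alpha^{1/(20n)}$. Taking square roots and multiplying by $\phi(\alpha)$ gives $\phi(\alpha)\sqrt{\delta_j} < A \max(\min(d_j,d_{j+1}), 1/\sqrt{\alpha})$. The triangle inequality combined with the hypothesis \eqref{eq:prop1} yields $|d_{i+1}-d_i| \le \phi(\alpha)\sqrt{\delta_i} + \alpha^{-2n-1}$, and hence
$$
|d_{i+1}-d_i| \;<\; A\,\max\!\bigl(\min(d_i,d_{i+1}),\,\tfrac{1}{\sqrt{\alpha}}\bigr) + \alpha^{-2n-1}.
$$
Bounding the minimum by either endpoint, this gives the symmetric one-sided recursion $d_{i \pm 1} \le (1+A)\,d_i + A/\sqrt{\alpha} + \alpha^{-2n-1}$.

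Starting from the hypothesis $d_{j_0} \le M\phi(\alpha)^2/\sqrt{\alpha}$, the plan is to propagate this recursion in both directions away from $j_0$. Since $|i - j_0| \le 2n+1$ for all $i$, iterating at most $2n+1$ times produces
$$
d_i \;\le\; (1+A)^{2n+1}\left(d_{j_0} + \tfrac{1}{\sqrt{\alpha}} + \tfrac{\alpha^{-2n-1}}{A}\right).
$$
Because $\phi(\alpha) = \alpha^{o(1)}$, we have $A = \alpha^{1/(20n)+o(1)}$ and the term $\alpha^{-2n-1}/A$ is negligible compared to $1/\sqrt{\alpha}$. Using $(1+A)^{2n+1} \le (2A)^{2n+1} = \alpha^{(2n+1)/(20n) + o(1)} = \alpha^{1/10 + 1/(20n) + o(1)}$, the total exponent on $\alpha$ is at most $1/10 + 1/(20n) - 1/2 + o(1) = -2/5 + 1/(20n) + o(1)$, which is strictly negative for $n\ge 1$. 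Hence $d_i \le \alpha^{-2/5 + 1/(20n) + o(1)}$ uniformly in $i$.

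Plugging this back into the recursion $\sqrt{\delta_j} < A\max(\min(d_j,d_{j+1}), 1/\sqrt{\alpha})$ yields $\sqrt{\delta_j} \le \alpha^{-2/5 + 1/(10n) + o(1)}$, and therefore $\delta_j \le \alpha^{-4/5 + 1/(5n) + o(1)}$ for every $j$. Summing over the $2n+1$ intervals gives
$$
1 \;=\; \sum_{j=0}^{2n} \delta_j \;\le\; (2n+1)\,\alpha^{-4/5 + 1/(5n) + o(1)},
$$
which tends to $0$ as $\alpha \to \infty$. This contradicts $\sum_j \delta_j = 1$ once $\alpha$ exceeds a constant depending on $n$ and $M$, completing the proof. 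The main technical issue to watch is the exponent accounting: the $2n+1$ propagation steps inflate the starting bound $\alpha^{-1/2+o(1)}$ by a factor $\alpha^{(2n+1)/(20n)}$, so the choice of exponent $1/(10n)$ in the hypothesized failing inequality is exactly what is needed to keep the final exponent strictly below $-1/2$ at the level of $d_i$, and strictly below $-1$ after summing the $\delta_j$.
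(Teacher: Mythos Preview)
Your proof is correct and follows essentially the same contradiction strategy as the paper: start from the index $j_0$ where $d_{j_0}$ is small, propagate a recursive growth bound on the $d_i$'s using condition~\eqref{eq:prop1}, and conclude that all the gaps $\delta_j$ are too small to sum to $1$. The paper organizes this slightly differently---it uses a time-reversal WLOG to work only to the right of $j_0$ and tracks the cumulative times $a_i = t_{j_0+i}-t_{j_0}$ directly---whereas you propagate in both directions on the $d_i$'s and then bound the $\delta_j$'s; your version is a bit cleaner in avoiding the case split. One small slip: your closing remark that the exponent is ``strictly below $-1/2$ at the level of $d_i$'' and ``strictly below $-1$ after summing the $\delta_j$'' is off (you actually get $-2/5+1/(20n)$ and $-4/5+1/(5n)$), but all that the argument needs is that the latter exponent is negative, which it is.
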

The proof of this lemma is postponed to the appendix. Denote by $P$ the orthogonal projection to $sp\left\{ n_{r},n_{s}\right\}$ and let $w_0$ be the tip of the wedge created by $P F_s$ and $P F_r$. Then condition \eqref{eq:prop2} implies that there exists an index $j_0$ for which $|P b_{j_0} - w_0| < M \frac{\phi(\alpha)^{2}}{\sqrt{\alpha}}$. We may now invoke the above lemma on the points $(P b_0, ..., P b_{2n})$ to conclude that whenever $\alpha > C$ where $C$ depends only on $n$, there exists an index $0 \leq j \leq 2n$ for which 
\begin{equation} \label{eq:condj}
t_{j+1}-t_{j}\geq\alpha^{1/(10n)}\max\left( \min(|P b_j-w_{0}|, |P b_{j+1} - w_0|)^{2},\frac{1}{\alpha}\right).
\end{equation}

The next proposition consists of the core estimates which will be combined in order to yield the bound of Proposition \ref{prop:tildec}. Its proof is based on two dimensional estimates for exit probabilities for a Brownian motion and Brownian Bridge, which will be established in the next section.
\begin{prop} \label{prop:bounds}
	For every dimension $n$ and every $\theta \geq 0$ and $\varepsilon>0$ there exists a constant
	$C_{\theta,\epsilon}$ such that the following holds. Let $(r,s) \in \Delta_n \times \Delta_n$ and let
	$(t_0,..,t_{2n+1}) = \mathbf{t}(r,s)$. Fix points $b_{1},\dots,b_{2n+1}\in\mathbb{R}^{n}$. Consider the events $S$ and $\{H_i\}_{i=0}^{2n}$ defined in equations (\ref{eq:defS}) and \eqref{eq:defHi}. Then almost surely, we have for all $\alpha > C_{\theta(r,s), \epsilon}$ the following bounds.
	\begin{enumerate}[(i)]
	\item
	For all $1\leq i\leq 2n-1$, we have 
	\begin{equation} 
	\mathbb{P}(H_{i} \cap R_\alpha[t_i, t_{i+1}]~|S)\leq\frac{1}{(t_{i+1}-t_{i})\alpha} \cdot \alpha^{\varepsilon}.\label{eq:boundH1}
	\end{equation}
	\item	
	For $i\in\{0,2n\}$, we have 
	\begin{equation}
	\mathbb{P}(H_{i}\cap R_\alpha[t_i, t_{i+1}]~|S)\leq\frac{1}{\sqrt{(t_{i+1}-t_{i})\alpha}} \cdot \alpha^{\varepsilon}.\label{eq:boundH2}
	\end{equation}
	\item
	\label{item:part3} For all $1\leq j \leq 2n-1$ such that condition \eqref{eq:condj} is satisfied, we have
	\begin{equation}
	\mathbb{P}(H_{j}\cap R_\alpha[t_j, t_{j+1}]~|S)\leq\frac{1}{(t_{j+1}-t_{j})\alpha} \cdot \alpha^{-\tfrac{\theta(r,s)}{800n} + \varepsilon}\label{eq:boundH3}
	\end{equation}
	\item
	For $j \in \{0, 2n \}$ such that condition \eqref{eq:condj} is satisfied, we have
	\begin{equation}
	\mathbb{P}(H_{j} \cap R_\alpha[t_j, t_{j+1}]~|S)\leq\frac{1}{\sqrt{(t_{j+1}-t_{j})\alpha}} \cdot \alpha^{-\tfrac{\theta(r,s)}{800n} + \varepsilon}.\label{eq:boundH4}
	\end{equation}
	\end{enumerate}
\end{prop}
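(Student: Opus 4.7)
The plan is to condition on $S$, which by the strong Markov property turns the Brownian motion on each $[t_i,t_{i+1}]$ into an independent Brownian bridge from $b_i$ to $b_{i+1}$ with variance $\Delta_i:=t_{i+1}-t_i$. The event $H_i$ depends on $B$ only through its projection $PB$ onto the 2D subspace $V=\mathrm{span}(n_r,n_s)$, and conditionally on $S$ this projection is again a 2D Brownian bridge, from $Pb_i$ to $Pb_{i+1}$. The event $H_i$ then becomes: this 2D bridge stays inside the planar wedge $W\subset V$ whose boundary half-planes have outer normals $n_r,n_s$ and whose opening angle is $\beta:=\pi-\theta(r,s)$; write $w_0\in V$ for its tip. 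Thus all four bounds reduce to estimating the probability that a 2D Brownian bridge remains in $W$.

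For bounds (i)--(ii), the wedge structure is ignored: drop one of the half-plane constraints defining $W$ and apply the classical 1D bridge-below-a-level estimate $1-\exp(-2u_i u_{i+1}/\Delta_i)\le 2u_i u_{i+1}/\Delta_i$, where $u_k$ is the (positive) distance of $\langle b_k,n_r\rangle$ from the cap $c_r$. For an interior interval, the H\"older-type oscillation bound supplied by $R_\alpha[t_i,t_{i+1}]$, combined with the fact that $c_r$ sits only $\phi(\alpha)^2/\sqrt\alpha$ above $\langle B(r_1),n_r\rangle$, controls both $u_i$ and $u_{i+1}$ and yields~\eqref{eq:boundH1}. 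For an endpoint interval ($i\in\{0,2n\}$) only one such factor is available, producing the weaker bound~\eqref{eq:boundH2}. All factors of the form $\phi(\alpha)^{O(1)}=e^{O(\sqrt{\log\alpha})}$ are absorbed into $\alpha^{\varepsilon}$ at the cost of taking $\alpha$ large enough depending on $\varepsilon$.

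For bounds (iii)--(iv), we use the full wedge geometry. The key planar ingredient (announced by the authors as the content of the next section, which we assume here) is the bridge estimate: for a 2D Brownian bridge from $x$ to $y$ over time $T$ in a wedge of opening angle $\beta$ with tip $w_0$, the probability of remaining in the wedge is at most a universal constant times $(\rho_x\rho_y/T)^{\pi/\beta}$, where $\rho_\bullet:=|\bullet-w_0|$. This follows from conformally mapping the wedge to a half-plane via $z\mapsto z^{\pi/\beta}$ and applying the half-plane bridge estimate. Since $\pi/\beta=\pi/(\pi-\theta(r,s))\ge 1+\theta(r,s)/\pi$, the wedge bound improves on the half-plane bound used in Step~2 by a factor $(\min(\rho_j,\rho_{j+1})/\sqrt{\Delta_j})^{\theta(r,s)/\pi}$. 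Condition~\eqref{eq:condj} gives precisely $\min(\rho_j,\rho_{j+1})/\sqrt{\Delta_j}\le\alpha^{-1/(20n)}$ (in both sub-cases of the $\max$ in that condition), so the improvement factor is at most $\alpha^{-\theta(r,s)/(20\pi n)}\le\alpha^{-\theta(r,s)/(800n)}$. For $j\in\{0,2n\}$, the same argument over the weaker base bound $1/\sqrt{\Delta_j\alpha}$ of~\eqref{eq:boundH2} gives~\eqref{eq:boundH4}.

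The main obstacle is not the reduction itself, which is essentially bookkeeping, but the planar Brownian bridge exit estimate from a wedge with the sharp exponent $\pi/\beta$ and with uniform constants that allow the Brownian bridge conditioning, the Hölder event $R_\alpha$, and the widths $\phi(\alpha)^2/\sqrt{\alpha}$ of the caps to be combined without losing the $\alpha^{-\theta(r,s)/(800n)}$ improvement. All polylogarithmic losses must be collected into a single $\alpha^{\varepsilon}$, which forces the threshold $C_{\theta(r,s),\varepsilon}$ to depend on both $\varepsilon$ and the angle. Once this 2D estimate is in place, the four bounds of Proposition~\ref{prop:bounds} follow from the outline above.
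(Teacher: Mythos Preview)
Your overall plan---project onto $V=\mathrm{span}(n_r,n_s)$ and reduce each $H_i$ to a 2D bridge-in-wedge estimate---is exactly the paper's. But two of your specific steps have real gaps.

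\textbf{Parts (i)--(ii).} Your one-shot use of the 1D bridge formula $1-e^{-2u_iu_{i+1}/\Delta_i}\le 2u_iu_{i+1}/\Delta_i$ requires \emph{both} $u_i$ and $u_{i+1}$ to be distances to the \emph{same} half-plane. But for an interior interval, $b_i$ may be a vertex of $F_r$ (hence at distance $\phi(\alpha)^2/\sqrt\alpha$ from the $n_r$-face of $W'$) while $b_{i+1}$ is a vertex of $F_s$ (close only to the $n_s$-face). In that mixed case your $u_{i+1}$ is only controlled by $R_\alpha$ as $u_{i+1}\le u_i+\phi(\alpha)\sqrt{\Delta_i}$, which yields $u_iu_{i+1}/\Delta_i\lesssim \phi(\alpha)^3/\sqrt{\alpha\Delta_i}$, and this is \emph{not} $\le \alpha^\varepsilon/(\alpha\Delta_i)$ for $\Delta_i$ of order one. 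The paper fixes this by conditioning on $B((t_i+t_{i+1})/2)$ and applying a one-sided half-plane estimate (Lemma~\ref{lem:mainest} with $\theta=0$) on each half, using $\mathcal{H}_1$ for the left half and $\mathcal{H}_2$ for the right half; this is what produces the two independent factors of $\phi(\alpha)^2/\sqrt{\alpha\Delta_i}$.

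\textbf{Parts (iii)--(iv).} Your assumed bridge-in-wedge bound $(\rho_x\rho_y/T)^{\pi/\beta}$ is not what the paper establishes, and your conformal-map justification is invalid: Brownian \emph{motion} is conformally invariant up to a random time change, but Brownian \emph{bridge} is not, because the endpoint conditioning does not commute with that time change. Even granting a correct bridge-in-wedge heat-kernel bound, $\rho$ is the distance to the \emph{tip} while your half-plane bound in Step~2 uses the distance to the \emph{boundary}; since $\rho\ge u$, the wedge bound does not dominate the half-plane bound times a small factor the way you claim. The paper avoids both issues by a three-interval split of $[t_j,t_{j+1}]$: a short initial piece $[t_j,t_j+S^2]$ with $S^2=\max(\alpha^{-1},|Pb_j-w_0|^2)$ handled by a half-plane bound, then a middle piece on which one applies the wedge estimate (Lemma~\ref{lem:mainest}, proved for bridges via a dyadic time decomposition and Spitzer's Brownian-motion bound, \emph{not} conformal maps) using only that the \emph{starting point} is at distance $O(\phi(\alpha)S)$ from the tip, then a final half-plane piece. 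The improvement factor $(S/\sqrt{\Delta_j})^{\theta/20}$ then comes directly from condition~\eqref{eq:condj}.
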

The proof of this proposition is the objective of the next section. Given those bounds, we are finally ready to prove the main result of the section.
\begin{proof}[Proof of Proposition \ref{prop:tildec}]
	Defining $b_{i}=B(t_{i})$ for $1\leq i\leq2n+1$, let $E_{1},E_{2}$
	be the events that properties \eqref{eq:prop1} and \eqref{eq:prop2} hold, respectively. Note
	that 
	\begin{equation} \label{eq:idd1}
	\tilde{\CC}_{\alpha,\kappa}(r,s) = E_{2} \cap \tilde{E}_{\alpha}(r)\cap\tilde{E}_{\alpha}(s)
	\end{equation}
	and
	\begin{equation} \label{eq:idd2}
	R_{\alpha}\subset E_{1}.
	\end{equation}
	
	Remark that, since we know that, under $E_1 \cap E_2$, condition \eqref{eq:condj} is satisfied for at least one index $j$, we have that one of the terms in the product $\prod_{i=0}^{2n}\mathbb{P} \left . \bigl (H_i \right |S \bigr )$ can be bounded by the improved bounds given in equations \eqref{eq:boundH3} and \eqref{eq:boundH4}. Thus, invoking Proposition \ref{prop:bounds} with $\varepsilon=\tfrac{\kappa}{2^{15} n(2n+1)}$ gives that under the event $E_1 \cap E_2$ we have, almost surely,
	\begin{align}\label{eq:finalproduct}
	\prod_{i=0}^{2n}\mathbb{P} \left . \bigl (H_i \cap R_\alpha[t_i, t_{i+1}]~ \right | & B(t_{1}),...,B(t_{2n+1}) \bigr ) \\ & \leq \alpha^{-2n-\theta(r,s)/(800n)+(2n+1)\varepsilon}\frac{1}{\sqrt{t_{1}(1-t_{2n})}}\prod_{i=2}^{2n}\frac{1}{(t_{i}-t_{i-1})} \nonumber \\
	& \leq \alpha^{-2n-\kappa/(16000n)}\frac{1}{\sqrt{t_{1}(1-t_{2n})}}\prod_{i=2}^{2n}\frac{1}{(t_{i}-t_{i-1})}, \nonumber
	\end{align}
	for all $\alpha > C_\kappa$ (where the term $\alpha^{-\theta(r,s)/(800n)}$ appears thanks to existence of the index $j$ given by \eqref{eq:condj}). Thus, we can calculate
	\begin{align*}
	\mathbb{P}\left(\tilde{\CC}_{\alpha,\kappa}(r,s)\right)~ & =\mathbb{E}\left[\mathbb{P}\left(\tilde{\CC}_{\alpha,\kappa}(r,s)|B(t_{1}),...,B(t_{2n+1})\right)\right]\\
	& \stackrel{\eqref{eq:idd1}}{=} \mathbb{E}\left[\mathbb{P}\left(\tilde{E}_{\alpha}(r)\cap\tilde{E}_{\alpha}(s)|B(t_{1}),...,B(t_{2n+1})\right)\mathbf{1}_{E_{2}}\right]\\
	& \stackrel{\eqref{eq:idd2}}{\leq}\mathbb{P}(R_{\alpha}^{C})+\mathbb{E}\left[\mathbb{P}\left(\tilde{E}_{\alpha}(r)\cap\tilde{E}_{\alpha}(s) \cap R_\alpha |B(t_{1}),...,B(t_{2n+1})\right)\mathbf{1}_{E_{1}\cap E_{2}}\right]\\
	& \stackrel{\eqref{HiProd}}{\leq}\mathbb{P}(R_{\alpha}^{C})+\mathbb{E}\left[\mathbf{1}_{E_{1}\cap E_{2}}\prod_{i=0}^{2n}\mathbb{P}(H_{i} \cap R_\alpha[t_i, t_{i+1}]~ |B(t_{1}),...,B(t_{2n+1}))\right]\\
	& \stackrel{\eqref{eq:finalproduct}}{\leq}\mathbb{P}(R_{\alpha}^{C})+\alpha^{-2n-\kappa/(16000n)}\frac{1}{\sqrt{t_{1}(1-t_{2n})}}\prod_{i=2}^{2n}\frac{1}{(t_{i}-t_{i-1})}\\
	& \stackrel{\eqref{eq:pra}}{\leq}\alpha^{-2n-1}+\alpha^{-2n-\kappa/(16000n)}\frac{1}{\sqrt{t_{1}(1-t_{2n})}}\prod_{i=2}^{2n}\frac{1}{(t_{i}-t_{i-1})}.
	\end{align*}
	This completes the proof. 
\end{proof}

\section{Estimates for two-dimensional wedges}

The objective of this section is to prove Proposition \ref{prop:bounds}, which amounts to new estimates regarding exit probabilities of Brownian bridges from two dimensional wedges. The needed estimates must have two features that do not seem to be supported by existing bounds in the literature: They need not only to exploit the fact that the path stays inside a wedge, but also to fully exploit the fact that both endpoints are close to the boundary of the wedge, and they should be valid upon conditioning on both endpoints. \\

For $w_0 \in \mathbb{R}^2$ and $\alpha,\beta>0$ we define, 
\[
\W(w_{0},\beta)=\left\{ w_{0}+t(\cos x,\sin x);~t\in[0,\infty),|x|\leq\beta\right\}.
\]

The following lemma is a direct consequence of \cite[Theorem 2]{SP}. 
\begin{lem} \label{lem:Spitzer}
For every $\pi>\theta\geq0$ there exists $J_{\theta}>0$ such that
the following holds. Let $t,r>0$. Let $W=\W(0,\pi-\theta)$. Let $B(t)$
be a planar Brownian motion with starting point $B(0)=a$, a point
satisfying $a\in W$ and $|a|=r$. Then, 
\[
\mathbb{P}\big(B(s)\in W,~~\forall s\in[0,t]\big)\leq J_{\theta}\left(\frac{r}{\sqrt{t}}\right)^{1+\frac{\theta}{2\pi}}.
\]
\end{lem}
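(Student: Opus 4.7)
My plan is to obtain the lemma as a direct application of Spitzer's Theorem 2 in \cite{SP}, after a standard Brownian-scaling reduction.

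First, I would reduce to a fixed-time window. Setting $\tilde B(u) := B(tu)/\sqrt{t}$, Brownian scaling gives that $\tilde B$ is a planar Brownian motion started at $a/\sqrt{t}$, at distance $\rho := r/\sqrt{t}$ from the origin. Since $W$ is a cone with apex at $0$, the event $\{B(s) \in W, \forall s \in [0,t]\}$ coincides with $\{\tilde B(u) \in W, \forall u \in [0,1]\}$. The desired estimate thus becomes
\[
\mathbb{P}\big(\tilde B(u) \in W, \forall u \in [0,1]\big) \leq J_\theta \, \rho^{1 + \theta/(2\pi)}.
\]

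Second, for $\rho \geq 1$ the right-hand side is at least $J_\theta$ and the estimate holds trivially provided $J_\theta \geq 1$. So the interesting regime is $\rho < 1$, which is where I would invoke Spitzer's result. Its standard proof passes through the conformal map $z \mapsto z^{p}$ (with the exponent $p$ depending on the wedge opening angle), which sends the wedge $W$ onto a half-plane; the image of $\tilde B$ is then, after a deterministic time-change, again a planar Brownian motion, whose survival in the half-plane is controlled by the reflection-principle estimate (probability $\lesssim h/\sqrt{s}$ when starting at height $h$). Composing the two steps yields the claimed polynomial decay in $\rho$.

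The part that deserves the most care is matching parameters: one must verify that, for the specific wedge $W = \W(0, \pi - \theta)$, the exponent coming out of Spitzer's formula is precisely $1 + \theta/(2\pi)$, and that the angular dependence of the survival probability (a factor of the form $\sin(\pi \varphi/\alpha) \leq 1$, where $\varphi$ is the initial polar angle of $a$ inside $W$) can be absorbed into the constant $J_\theta$. Both are elementary checks, so---as the statement itself signals by calling this ``a direct consequence'' of \cite[Theorem 2]{SP}---no substantial additional work is required beyond the scaling step and the parameter verification.
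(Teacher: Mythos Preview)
Your proposal is correct and matches the paper's treatment exactly: the paper offers no proof and simply records the lemma as a direct consequence of \cite[Theorem 2]{SP}, which is precisely what you do after the routine Brownian-scaling reduction and the trivial case $\rho\geq 1$. One small correction on the parameter check: Spitzer's exponent for a wedge of opening angle $\pi-\theta$ is $\pi/(\pi-\theta)$, which is \emph{at least} (not exactly) $1+\theta/(2\pi)$ on $[0,\pi)$; since $r/\sqrt{t}\leq 1$ in the nontrivial regime, this stronger exponent immediately yields the stated upper bound.
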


The next lemma bound is the main technical ingredient of this section. It gives an upper bound for the probability of a Brownian bridge, started close to the tip of a wedge, to remain inside the wedge.
\begin{lem} \label{lem:technicalwedge}
For any $0 \leq \theta < \pi$ and $\varepsilon>0$ there exists $C>0$
such that the following holds. Suppose that $\alpha>C$ and $r<1$
and that $W=W(w_{0},\pi-\theta)$ for some $w_0 \in \mathbb{R}^2$. Let $a,b\in \mathbb{R}^2$ be points with $|a-w_{0}|=r$
and $|b-a|<2\phi(\alpha)$. Let $X(t)$ be a Brownian bridge with
boundary conditions $X(0)=a$ and $X(1)=b$. Then, 
\begin{equation} \label{eq:techbound}
\mathbb{P}\left(X(t)\in W,~~\forall t\in[0,1]\right)\leq\alpha^{\varepsilon}max(\alpha^{-1},r)^{1+\frac{\theta}{20}}.
\end{equation}
\end{lem}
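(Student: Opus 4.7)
The plan is to bound the bridge probability by a Brownian motion probability via a change of measure on a short initial interval, and then invoke Spitzer's Lemma~\ref{lem:Spitzer}. Set $\tau := 1/\phi(\alpha)^{2}$. By monotonicity, $\mathbb{P}(X(t) \in W, \forall t \in [0,1]) \leq \mathbb{P}(X(t) \in W, \forall t \in [0,\tau])$, and by absolute continuity of the bridge law with respect to the Brownian motion law on $[0,\tau]$, this probability equals $\mathbb{E}_a^{BM}[\mathbf{1}_E Z]$, where $E := \{B(s) \in W, \forall s \in [0,\tau]\}$ and $Z := p_{1-\tau}(B(\tau), b)/p_1(a, b)$. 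Writing $\delta := B(\tau) - a$, a direct computation gives
$$
Z = \frac{1}{1-\tau} \exp\!\left(-\frac{|a-b|^{2}\tau + |\delta|^{2} + 2\langle \delta, a-b\rangle}{2(1-\tau)}\right).
$$

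The scale $\tau = 1/\phi(\alpha)^{2}$ is chosen precisely so that $|a-b|^{2}\tau \leq 4$, using the hypothesis $|b-a| < 2\phi(\alpha)$. The next step is to define the ``typical'' event $A := \{|\langle\delta, a-b\rangle| \leq K\sqrt{\tau}|a-b|\}$ with $K := \sqrt{C \log\alpha}$ for a constant $C$ to be chosen large. On $A$ one has $|\langle\delta, a-b\rangle| \leq K\sqrt{\tau}|a-b| \leq 2K$, and since the other terms in the numerator of the exponent are nonnegative, this gives $Z \leq 2 e^{4K} \leq \alpha^{\varepsilon/4}$ for $\alpha$ large enough. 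A direct 2D Gaussian integration yields the identity
$$
\mathbb{E}_a^{BM}[Z^{2}] = \frac{1}{1-\tau^{2}} \exp\!\left(\frac{|a-b|^{2}\tau}{1+\tau}\right),
$$
which is $O(1)$ for our $\tau$. Combined with the Gaussian tail $\mathbb{P}(A^{c}) \leq 2 e^{-K^{2}/2} = 2\alpha^{-C/2}$, Cauchy--Schwarz yields $\mathbb{E}[Z \mathbf{1}_{A^{c}}] \leq O(\alpha^{-C/4})$.

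Combining with Spitzer's lemma, which gives $\mathbb{P}(E) \leq J_{\theta}(r/\sqrt{\tau})^{1+\theta/(2\pi)} = J_{\theta}(r\phi(\alpha))^{1+\theta/(2\pi)}$, the contribution from $A$ is
$$
\mathbb{E}[\mathbf{1}_E Z \mathbf{1}_A] \leq \alpha^{\varepsilon/4} J_{\theta}(r\phi(\alpha))^{1+\theta/(2\pi)} \leq \alpha^{\varepsilon/2} r^{1+\theta/(2\pi)},
$$
after absorbing $\phi(\alpha)^{1+\theta/(2\pi)} \leq \alpha^{\varepsilon/4}$ (valid since $\phi(\alpha) = \alpha^{1/\sqrt{\log\alpha}}$). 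The key inequality $1/(2\pi) > 1/20$ yields $r^{1+\theta/(2\pi)} \leq r^{1+\theta/20}$ for $r \in (0,1)$, and $r^{1+\theta/(2\pi)} \leq \alpha^{-(1+\theta/20)}$ when $r \leq \alpha^{-1}$. Taking $C$ sufficiently large ensures the residual $O(\alpha^{-C/4})$ from $A^{c}$ is absorbed into $\alpha^{\varepsilon} \max(\alpha^{-1}, r)^{1+\theta/20}$.

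The main obstacle is the Gaussian bookkeeping: both the pointwise bound $Z \leq 2 e^{4K}$ on $A$ and the explicit expression for $\mathbb{E}[Z^{2}]$ depend on $\tau = 1/\phi(\alpha)^{2}$ being the precise scale at which $|a-b|^{2}\tau$ stays bounded. The slack between Spitzer's exponent $1 + \theta/(2\pi)$ and the target $1 + \theta/20$ is exactly what provides room to absorb subpolynomial factors such as $\phi(\alpha)^{1+\theta/(2\pi)}$ into $\alpha^{\varepsilon}$.
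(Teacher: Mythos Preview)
Your proof is correct and takes a genuinely different route from the paper's argument.

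The paper writes $X(t)=B(t)-t(B(1)-b)$ and uses a pathwise comparison: on a high-probability oscillation event $F$, the drift term $t(B(1)-b)$ is small on short initial intervals, so $X$ and $B$ are close. A single interval $[0,t_1]$ would then yield only half the desired exponent, so the paper introduces a dyadic sequence $t_j = (r/\phi(\alpha))^{2^{-(j-1)}}$, applies Spitzer's Lemma on each $[t_{j-1},t_j]$ to an enlarged wedge, and multiplies the resulting bounds to recover the full exponent $(1-2^{-k})(1+\theta/2\pi)$, which for large enough $k$ exceeds $1+\theta/20$.

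Your argument bypasses the iteration entirely by working with the explicit Radon--Nikodym derivative $Z=p_{1-\tau}(B(\tau),b)/p_1(a,b)$ on a single interval $[0,\tau]$ with $\tau=1/\phi(\alpha)^2$. The scale is chosen so that $|a-b|^2\tau\le 4$, which makes both the pointwise bound on $Z$ (on the typical event $A$) and the second moment $\mathbb{E}[Z^2]$ uniformly bounded; Cauchy--Schwarz then disposes of $A^c$ as a negligible polynomial-in-$\alpha$ error. A single application of Spitzer on $[0,\tau]$ already gives the full exponent $1+\theta/(2\pi)$, and the gap between $1/(2\pi)$ and $1/20$ absorbs the subpolynomial $\phi(\alpha)$ factors. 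Your approach is shorter and more direct, and it yields the sharper exponent $1+\theta/(2\pi)$ immediately rather than approximating it via the dyadic sum; the paper's approach, on the other hand, avoids any explicit Gaussian density computation and stays purely at the level of hitting-probability estimates for Brownian motion.
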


\begin{proof}
By applying a translation, we assume without loss of generality that $a=0$. Furthermore it suffices
to prove \eqref{eq:techbound} for $r\geq\alpha^{-1}$. Indeed , it is easy to see
that the left hand side of \eqref{eq:techbound} is monotonically increasing with respect to $r$ (by monotonicity of the corresponding events). Define the event 
\[
E:=\left\{ X(t)\in W,~~\forall t\in[0,1]\right\} .
\]
We can write 
\begin{equation}
X(t)=B(t)-t(B(1)-b)\label{eq:XB}
\end{equation}
where $B(t)$ is a standard Brownian motion. Consider the event 
\[
F:= \bigl \{|B(t)|<t^{1/2}\phi(\alpha)+\alpha^{-2n-1},\forall t\in[0,1] \bigr \}
\]
Using Lemma \ref{lem:Ralpha} we have $\mathbb{P}(F)\geq1- \alpha^{-2n-1}$ whenever $\alpha$ is bigger than some universal constant. Let
$k$ be an integer whose value will be chosen later on. Define 
\[
u=r\phi(\alpha)^{-1},~t_{0}=0,~t_{j}=u^{2^{-(j-1)}},~\forall j\leq k.
\]
Next, consider the events 
\[
E_{j}:=\left\{ d(B(t),W)\leq4\phi(\alpha)r^{2^{-(j-1)}},~~\forall t\in[t_{j-1},t_{j}]\right\} .
\]
For each $1\leq j\leq k$, let
$$
w_j := w_0 - \left (4\sin \left (\frac{\pi-\theta}{2} \right )^{-1}\phi(\alpha)r^{2^{-(j-1)}}, ~~ 0 \right ) ~~\mbox{ and }~~ W_{j} := \W \left (w_j, \pi - \theta \right ),
$$
so that $W_j$ is a wedge with containing $W$, with the same opening angle, and whose tip $w_{j}$ is situated
``behind'' $w_{0}$ such that the distance between their outer boundaries
is $4\phi(\alpha)r^{2^{-(j-1)}}$. Clearly, we have 
\[
E_{j}\subseteq\left\{ B(t)\in W_{j},~~\forall t\in[t_{j-1},t_{j}]\right\} .
\]
Next consider the events
\[
E_{j}':=\left\{ |B(t_{j-1})|\leq\phi(\alpha)\sqrt{t_{j-1}}+\alpha^{-2n-1}\right\} \cap\left\{ B(t)\in W_{j},~~\forall t\in[t_{j-1},t_{j}]\right\} .
\]
First we claim that 
\begin{equation} 
E\subset\left(\bigcap_{j=1}^{k}E_{j}\right)\cup F^{C}\subseteq\left(\bigcap_{j=1}^{k}E_{j}'\right)\cup F^{C}.\label{eq:eej}
\end{equation}
Indeed, under the event $F$ we have $|B(1)|\leq\phi(\alpha)+\alpha^{-2n-1}\leq2\phi(\alpha)$,
and according to equation (\ref{eq:XB}), we have 
\[
|X(t)-B(t)|\leq4\phi(\alpha)t_{j}\leq4\phi(\alpha)r^{2^{-(j-1)}},~~\forall t<t_{j}.
\]
By the triangle inequality we have that $X(t)\in W$ implies the event
$E_{j}$. Under the event $F$ the event $E_{j}$ implies the event
$E_{j}'$, which establishes (\ref{eq:eej}). Next, by the Markov
property of Brownian motion, note that 
\[
\mathbb{P}\big(E_{j}'|~\sigma(E_{1}',...,E_{j-1}',B(t_{j-1}))\big)=\mathbb{P}\big(E_{j}'|~B(t_{j-1})\big).
\]
Remark that, under the event $E_{j}'$, we have
\begin{align}\label{eq:triang}
|B(t_{j-1})-w_{j}| ~& \leq |a- w_0| + |B(t_{j-1}) - a| +  |w_0 - w_{j}| \\
& \leq r + \phi(\alpha)\sqrt{t_{j-1}}+\alpha^{-2n-1} + 4\sin \left (\frac{\pi-\theta}{2} \right )^{-1}\phi(\alpha)r^{2^{-(j-1)}} \nonumber \\
& \leq 7\sin(\frac{\pi-\theta}{2})^{-1}\phi(\alpha)r^{2^{-(j-1)}}. \nonumber
\end{align}

We now apply Lemma \ref{lem:Spitzer} with respect to the enlarged wedge
$W_{j}$: 
\begin{align*}
\mathbb{P}\big(E_{j}'|~B(t_{j-1})\big) ~& \leq J_{\theta}\left(\frac{\mid B(t_{j-1})-w_{j}\mid}{\sqrt{t_{j}-t_{j-1}}}\right)^{1+\frac{\theta}{2\pi}} \\
& \stackrel{\eqref{eq:triang}}{\leq} J_{\theta}\left(\frac{7\sin(\frac{\pi-\theta}{2})^{-1}\phi(\alpha)r^{2^{-(j-1)}}}{\sqrt{t_{j}-t_{j-1}}}\right)^{1+\frac{\theta}{2\pi}} \\
& \leq J_{\theta} \left (\sqrt{28}\sin \left (\frac{\pi-\delta}{2} \right )^{-1}\phi(\alpha)^{2} \right )^{1+\frac{\theta}{2\pi}}r^{2^{-j}(1+\frac{\theta}{2\pi})},
\end{align*}
where the last inequality is valid as long as $t_{j}-t_{j-1}\geq t_{j}/2$ (for any
given $k$, this holds for all $1\leq j\leq k$ for sufficiently large
$\alpha$, due to the assumption $r<1$). Now because $r\geq\alpha^{-1}$, we can choose $k$ large
enough (in a way that depends only on $\theta,\varepsilon$) so that one has 
\[
r^{(2^{-1}+...+2^{-k})(1+\theta/\pi)}\leq\alpha^{\frac{1}{2}\varepsilon}r^{1+\theta/20}.
\]
It now follows that 
\begin{align*}
 \mathbb{P}\left (\bigcap_{j=1}^{k}E_{j}'\right )~&=\prod_{j=1}^{k}\mathbb{P}\big(E'_{j}|E_{1}'\cap...\cap E_{j-1}'\big)\\
 & = \prod_{j=1}^{k}\mathbb{E}\bigg(\mathbb{P}\big(E_{j}'|E_{1}'\cap...\cap E_{j-1}',B(t_{j-1})\big)\bigg)\\
 & \leq \prod_{j=1}^{k}J_{\theta} \left (\sqrt{28}\sin \left (\frac{\pi-\theta}{2} \right )^{-1}\phi(\alpha)^{2} \right )^{1+\frac{\theta}{2\pi}}r^{2^{-j}(1+\frac{\theta}{2\pi})}\\
 & \leq_{(*)}\frac{1}{2}\alpha^{\frac{1}{2}\varepsilon}\left(r^{\left(2^{-1}+...+2^{-k}\right)}\right)^{1+\theta/\pi}\leq\frac{1}{2}\alpha^{\varepsilon}r^{1+\theta/20}.
\end{align*}
where ({*}) holds for sufficiently large $\alpha$.

Finally, by a union bound via Equation \eqref{eq:eej} we conclude that for $\alpha$ large enough
we have
\begin{align*}
 \mathbb{P}(E)\leq \tfrac{1}{2} \alpha^{\varepsilon}r^{1+\theta/20}+\mathbb{P} \left (F^{C} \right )\leq \tfrac{1}{2}\alpha^{\varepsilon}r^{1+\theta/20}+\alpha^{-2n-1}\leq\alpha^{\varepsilon}r^{1+\theta/20}.
\end{align*}
\end{proof}
We now have, as a direct consequence, the following slight generalization: 
\begin{lem} \label{lem:mainest}
Let $0 \leq s_{1}<s_{2} \leq 1$. For any $0 \leq \theta < \pi$ and $\varepsilon>0$ there exists $C>0$ such
that the following holds: Suppose that $\alpha>C$ and let $W=\W(w_{0},\pi-\theta)$. Let $a,b \in \mathbb{R}^2$ be points with $|a-w_{0}|=r$
and 
\begin{equation}\label{eq:condosc}
|b-a|<\phi(\alpha)\sqrt{s_{2}-s_{1}}.
\end{equation} 
Let $X(t)$ be a Brownian bridge with boundary conditions $X(s_{1})=a$ and $X(s_{2})=b$.
Then, 
\[
\mathbb{P}\left(X(t)\in W,~~\forall t\in[s_{1},s_{2}]\right)\leq\alpha^{\varepsilon} \max \left (\alpha^{-1}, \frac{r}{\sqrt{s_{2}-s_{1}} }\right )^{1+\theta/20}.
\]
\end{lem}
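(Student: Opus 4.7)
The plan is to deduce Lemma \ref{lem:mainest} from Lemma \ref{lem:technicalwedge} by a Brownian scaling argument, so essentially no new estimates are required; only a bookkeeping check that the hypotheses of the earlier lemma are preserved under the rescaling.

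First, I would dispose of the trivial regime. If $r \geq \sqrt{s_{2}-s_{1}}$, then $\max(\alpha^{-1}, r/\sqrt{s_{2}-s_{1}})^{1+\theta/20} \geq 1$, so the right-hand side of the claimed inequality is at least $\alpha^{\varepsilon} \geq 1$ for $\alpha$ large, and the bound holds vacuously. So assume $r < \sqrt{s_{2}-s_{1}}$.

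Now define the rescaled process
\[
\tilde{X}(u) := \frac{X(s_{1} + u(s_{2}-s_{1}))}{\sqrt{s_{2}-s_{1}}}, \qquad u \in [0,1].
\]
A standard computation shows that $\tilde{X}$ is a Brownian bridge on $[0,1]$ from $\tilde{a} := a/\sqrt{s_{2}-s_{1}}$ to $\tilde{b} := b/\sqrt{s_{2}-s_{1}}$. Setting $\tilde{w}_{0} := w_{0}/\sqrt{s_{2}-s_{1}}$ and $\tilde{W} := \W(\tilde{w}_{0}, \pi-\theta)$, one has the equivalence $\{X(t)\in W, \forall t \in [s_{1},s_{2}]\} = \{\tilde{X}(u) \in \tilde{W},~\forall u \in [0,1]\}$, since wedges with tip at the origin are invariant under positive dilations and we have merely translated and dilated. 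The hypothesis $|a-w_{0}| = r$ becomes $|\tilde{a} - \tilde{w}_{0}| = \tilde{r}$, where $\tilde{r} := r/\sqrt{s_{2}-s_{1}} < 1$ by our assumption. The hypothesis \eqref{eq:condosc} becomes $|\tilde{b}-\tilde{a}| < \phi(\alpha) < 2\phi(\alpha)$, matching the condition required in Lemma \ref{lem:technicalwedge}.

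Applying Lemma \ref{lem:technicalwedge} to $\tilde{X}$ with radius $\tilde{r}$ and the wedge $\tilde{W}$, we obtain
\[
\mathbb{P}\bigl(\tilde{X}(u) \in \tilde{W},~\forall u \in [0,1]\bigr) \leq \alpha^{\varepsilon}\max(\alpha^{-1}, \tilde{r})^{1+\theta/20} = \alpha^{\varepsilon}\max\!\left(\alpha^{-1}, \frac{r}{\sqrt{s_{2}-s_{1}}}\right)^{1+\theta/20},
\]
which is the desired bound. There is no substantive obstacle; the only point to verify with care is that the rescaling is indeed a Brownian bridge on $[0,1]$ (which follows from the well-known scaling $(B(c t)/\sqrt{c})_{t\geq 0} \stackrel{d}{=} (B(t))_{t\geq 0}$ applied to the bridge representation), and that the threshold constant $C$ needed here can be taken equal to the one from Lemma \ref{lem:technicalwedge} (possibly after enlarging it so that $\alpha^{\varepsilon} \geq 1$ in the trivial regime).
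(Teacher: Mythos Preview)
Your proposal is correct and matches the paper's own proof, which simply states ``This follows directly from Lemma \ref{lem:technicalwedge} by Brownian scaling.'' You have spelled out that scaling argument carefully, including the check that the rescaled endpoints satisfy the hypotheses of Lemma \ref{lem:technicalwedge} and the disposal of the trivial regime $r \geq \sqrt{s_{2}-s_{1}}$.
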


\begin{proof}
This follows directly from Lemma \ref{lem:technicalwedge} by Brownian scaling. 
\end{proof}
\subsection{Proof of Proposition \ref{prop:bounds}}
The next four lemmas correspond to the bounds of Proposition \ref{prop:bounds}. \\

Let us introduce some notation that will be used in the proofs. Let $(r,s) \in \Delta_n \times \Delta_n$ be given, with the corresponding $(t_0,...,t_{2n+1}) = \mathbf{t}(r,s)$. We fix $0 \leq i \leq 2n$ and denote $s_1 = t_i$ and $s_2 = t_{i+1}$. Let $P$ be the orthogonal projection onto the span of $n_r$ and $n_s$ embedded in $\mathbb{R}^2$, and consider the projected Brownian motion $\tilde B(t) := P B(t)$. In what follows, we will allow ourselves to use the notation $B(t)$ in place of $\tilde B(t)$. Let $d_1= P b_{i}$ and $d_2 = P b_{i+1}$ so that $d_1, d_2 \in \mathbb{R}^2$ and consider the events $E_{i}=\left\{ \tilde B(s_{i})=d_{i}\right\}$, $i=1,2$. 

Let $W$ be the wedge corresponding to $P F_s$ and $P F_r$, so that $W \subset \mathbb{R}^2$ is a wedge of opening angle $\pi-\theta$, $\theta:=\theta(r,s)$, and tip at some point $w_0$, so that $d_1, d_2$ are on $\partial W$. Consider the enlarged wedge $W^{'}$, whose normal directions are $n_r, n_s$, and which contains $W$ in a way that the distance between their respective edges is exactly $\frac{\phi(\alpha)^{2}}{\sqrt{\alpha}}$. With this notation, the event $H_i$ is identical to the event
$$
H_i = A := \{B(t) \in W', ~~ \forall t \in [s_1,s_2] \}.
$$

Finally, let $\mathcal{H}_1, \mathcal{H}_2$ each be one of the halfspaces which define $W'$ corresponding to $d_1$ and $d_2$ respectively, in a way that 
\begin{equation}\label{eq:disttowedge1}
\mathrm{dist}(d_1, \mathcal{H}_1) = \frac{\phi(\alpha)^{2}}{\sqrt{\alpha}}, ~ \mbox{ if } 1 \leq i \leq 2n 
\end{equation}
and
\begin{equation}\label{eq:disttowedge2}
\mathrm{dist}(d_2, \mathcal{H}_2) = \frac{\phi(\alpha)^{2}}{\sqrt{\alpha}}, ~ \mbox{ if } 0 \leq i \leq 2n-1 
\end{equation}
(remark that $\mathcal{H}_1$ and $\mathcal{H}_2$ are not necessarily distinct. Possibly, $\mathcal{H}_1 = \mathcal{H}_2$, in which case $W'$ strictly contains $\mathcal{H}_1\cap \mathcal{H}_2$. Moreover note that $\mathcal{H}_1$ is not defined for the case $i=0$ and $\mathcal{H}_2$ is undefined for $i=2n$). \\

The following lemma is equivalent to the bound \eqref{eq:boundH1}. 
\begin{lem}
With the above notation, assume that both conditions \eqref{eq:disttowedge1} and \eqref{eq:disttowedge2} hold. For every $\varepsilon>0$, there exists $C>0$ such that for any $\alpha>C$:
\[
\mathbb{P}\big(A ~ | E_{1},E_{2}\big)\leq\frac{1}{(s_{2}-s_{1})\alpha}\alpha^{\varepsilon}.
\]
\end{lem}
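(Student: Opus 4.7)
The plan is to reduce the two-dimensional wedge-exit estimate to a pair of one-dimensional half-space estimates by splitting the interval $[s_1,s_2]$ at its midpoint $m := (s_1+s_2)/2$ and invoking the Markov property of the bridge. Since $W' \subseteq \mathcal{H}_1 \cap \mathcal{H}_2$, we have
\[
A \subseteq \tilde A_1 \cap \tilde A_2, \qquad \tilde A_k := \bigl\{B(t) \in \mathcal{H}_k,\; \forall t \in I_k \bigr\},
\]
where $I_1 := [s_1,m]$ and $I_2 := [m,s_2]$. Conditionally on $B(m)$, the events $\tilde A_1$ and $\tilde A_2$ depend on disjoint time intervals of the bridge and are thus independent.

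For each $k\in\{1,2\}$, let $\pi_k$ denote the signed distance from $\partial \mathcal{H}_k$, oriented so that $\mathcal{H}_k = \{\pi_k \geq 0\}$; in particular $\pi_k(d_k) = h := \phi(\alpha)^2/\sqrt{\alpha}$ by \eqref{eq:disttowedge1}--\eqref{eq:disttowedge2}. Since the coordinates of $B$ are independent one-dimensional Brownian motions, $\pi_k \circ B$ is a one-dimensional Brownian bridge, and the classical reflection identity for a bridge staying positive gives
\[
\mathbb{P}\bigl(\tilde A_k \mid B(m)=x,\, E_1,E_2\bigr) \;\leq\; \frac{4\, h\, \pi_k(x)^+}{s_2-s_1}.
\]
Combining with the conditional independence of $\tilde A_1,\tilde A_2$ given $B(m)$ and applying Cauchy-Schwarz yields
\[
\mathbb{P}(A \mid E_1,E_2) \;\leq\; \frac{16\, h^2}{(s_2-s_1)^2}\,\sqrt{\mathbb{E}\bigl[\pi_1(B(m))^2\bigr]\,\mathbb{E}\bigl[\pi_2(B(m))^2\bigr]}.
\]

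It remains to estimate the Gaussian moments. Under $E_1 \cap E_2$, $B(m)$ is a two-dimensional Gaussian with mean $(d_1+d_2)/2$ and covariance $\tfrac{s_2-s_1}{4}I_2$, so $\pi_k(B(m))$ is one-dimensional Gaussian of variance $(s_2-s_1)/4$ and mean $(\pi_k(d_1)+\pi_k(d_2))/2$. Since $d_{3-k} \in \mathcal{H}_k$, one has $0 \leq \pi_k(d_{3-k}) \leq h + |d_1-d_2|$, and the projection of property \eqref{eq:prop1} (which, in the context of Proposition \ref{prop:bounds}, is supplied by the restriction to the event $R_\alpha[t_i,t_{i+1}]$) provides $|d_1-d_2| \leq \phi(\alpha)\sqrt{s_2-s_1} + \alpha^{-2n-1}$. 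This gives $\mathbb{E}[\pi_k(B(m))^2] \leq C\phi(\alpha)^2(s_2-s_1)$, provided $s_2-s_1 \geq \phi(\alpha)^2/\alpha$ (if this fails, the asserted bound exceeds $1$ and holds trivially). Plugging back yields $\mathbb{P}(A \mid E_1,E_2) \leq C\phi(\alpha)^6/(\alpha(s_2-s_1))$, and since $\phi(\alpha)^6 = e^{6\sqrt{\log\alpha}} \leq \alpha^{\varepsilon}$ once $\alpha$ is sufficiently large in terms of $\varepsilon$, the conclusion follows.

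The main subtlety lies in the case $\mathcal{H}_1 \neq \mathcal{H}_2$: using a single half-space on the full interval $[s_1,s_2]$ would yield only $\phi(\alpha)^3/\sqrt{\alpha(s_2-s_1)}$, matching the weaker estimate \eqref{eq:boundH2} but falling short of the sharper $1/(\alpha(s_2-s_1))$ required here. Splitting at the midpoint together with the Markov property is precisely what allows both boundary-proximity conditions \eqref{eq:disttowedge1}--\eqref{eq:disttowedge2} to contribute simultaneously through the product $h^2 = \phi(\alpha)^4/\alpha$.
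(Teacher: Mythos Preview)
Your argument follows the same skeleton as the paper's: split $[s_1,s_2]$ at the midpoint, use that $A\subseteq\{B\in\mathcal H_1\text{ on }[s_1,m]\}\cap\{B\in\mathcal H_2\text{ on }[m,s_2]\}$, and exploit conditional independence of the two half-bridges given $B(m)$. The difference lies in how the half-space piece is handled. The paper invokes its Lemma~\ref{lem:mainest} (the general wedge estimate, specialized to $\theta=0$) to get an almost-sure bound on each conditional factor that is \emph{uniform} in $B(m)$, so the outer expectation is trivial. You instead use the exact one-dimensional reflection identity $1-e^{-2ab/T}\le 2ab/T$, which leaves a factor $\pi_k(B(m))^+$ to be integrated; you then control $\mathbb E[\pi_k(B(m))^2]$ via Cauchy--Schwarz and a direct Gaussian computation. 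This is more elementary---it avoids the iterative machinery behind Lemma~\ref{lem:mainest}---and for this half-space lemma it is arguably cleaner. The paper's route has the advantage that Lemma~\ref{lem:mainest} is needed anyway for the sharper bounds~\eqref{eq:boundH3}--\eqref{eq:boundH4} where the wedge angle genuinely enters, so re-using it here costs nothing.

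One point worth flagging: your Gaussian-moment step requires $|d_1-d_2|\lesssim\phi(\alpha)\sqrt{s_2-s_1}$, which is \emph{not} part of the lemma's hypotheses as stated. You correctly trace this to property~\eqref{eq:prop1} via $R_\alpha[t_i,t_{i+1}]$, but note that the paper's own proof has exactly the same issue: it silently bounds $\mathbb P(A\cap R_\alpha[s_1,s_2]\mid E_1,E_2)$ rather than $\mathbb P(A\mid E_1,E_2)$, using $R_\alpha$ to verify condition~\eqref{eq:condosc}. Without this restriction the statement is in fact false (take $d_1,d_2$ on different edges of $W$, both far from the tip). So your handling of this point is consistent with what the paper actually proves and uses in Proposition~\ref{prop:bounds}.
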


\begin{proof}
Denote $r:=\frac{s_{1}+s_{2}}{2}$ and define $J_1 := [s_1, r]$ and $J_2 := [r, s_2]$. Consider the events
$$
D_i := \bigl \{B(t) \in \mathcal{H}_i, ~~ \forall t \in J_i \bigr \}, ~~ i=1,2.
$$
Clearly, we have
\[
A \subseteq D_1 \cap D_2.
\]
 We condition on $B(r)$, and use the independence of increments of Brownian motion to obtain an upper bound: 
$$
\mathbb{P}\big(A \cap R_\alpha[s_1,s_2] |E_{1},E_{2}\big) \leq \mathbb{\mathbb{E}}\bigg( \mathbb{P} \big ( D_1 \cap R_\alpha[J1] |E_{1},B(r)\big ) \times \mathbb{P}\big(D_2 \cap R_{\alpha}[J_2]|E_{2},B(r)\big) \bigg | E_1, E_2 \bigg).
$$
Note that under the event $R_\alpha[J1]$, we have $|B(r) - B(s_1)| \leq \sqrt{r-s_1} \phi(\alpha)$, which corresponds to condition \eqref{eq:condosc}. We can therefore apply Lemma \ref{lem:mainest} with respect to the half-space $\mathcal{H}_1$ (which we think of as a degenerate wedge with opening angle $\pi$) and the Brownian bridge on the interval $J_1$, taking $\frac{\varepsilon}{3}$ in place of $\varepsilon$. The condition \eqref{eq:disttowedge1} ensures that we may use $r=\frac{\phi(\alpha)^{2}}{\sqrt{\alpha}}$ as an argument to the lemma. We obtain that almost surely,
$$
\mathbb{P} \big ( D_1 \cap R_\alpha[J1] |E_{1},B(r)\big )  \leq\frac{\alpha^{\frac{\varepsilon}{3}}max(\alpha^{-1}, \phi(\alpha)^{2}\alpha^{-\frac{1}{2}})}{\sqrt{\frac{s_{2}-s_{1}}{2}}}
$$
whenever $\alpha > C$ for some constant $C>0$ depending only on $\varepsilon$. In a similar manner, this time relying on the condition \eqref{eq:disttowedge2}, we get 
$$
\mathbb{P} \big ( D_2 \cap R_\alpha[J2] |E_{2},B(r)\big )  \leq\frac{\alpha^{\frac{\varepsilon}{3}}max(\alpha^{-1}, \phi(\alpha)^{2}\alpha^{-\frac{1}{2}})}{\sqrt{\frac{s_{2}-s_{1}}{2}}}.
$$
By assuming that the constant $C$ is large enough, we have $max(\alpha^{-1}, \phi(\alpha)^{2}\alpha^{-\frac{1}{2}})= \phi(\alpha)^{2}\alpha^{-\frac{1}{2}}$,
so combining the last three displays, we get
\[
\mathbb{P}\big(A\cap R_{\alpha}[s_{1},s_{2}]|E_{1},E_{2}\big)\leq\left(\frac{ \alpha^{\frac{\varepsilon}{3}-\frac{1}{2}}\phi(\alpha)^{2}}{\sqrt{\frac{s_{2}-s_{1}}{2}}}\right)^{2}\leq\frac{1}{(s_{2}-s_{1})\alpha}\alpha^{\varepsilon},
\]
where the final inequality holds for $\alpha > C$ large enough. 
\end{proof}
We now derive an analogous estimate for ``edge'' intervals, giving the bound \eqref{eq:boundH2}.
\begin{lem}
With the above notation, assume that at least one of the two conditions \eqref{eq:disttowedge1}, \eqref{eq:disttowedge2} holds. For every $\varepsilon>0$, there exists $C>0$ such that for any $\alpha>C$:
\[
\mathbb{P}(A\cap R_{\alpha}[s_{1},s_{2}]|E_{1}, E_2)\leq\frac{1}{\sqrt{(t_{i}-t_{i-1})\alpha}}\alpha^{\varepsilon}.
\]
\end{lem}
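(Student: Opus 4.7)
The proof mirrors the interior lemma, but exploits only the single half-space boundary condition that remains available in the edge case, yielding one factor of the form $\phi(\alpha)^2/\sqrt{\alpha(s_2-s_1)}$ instead of two. By the time-reversal symmetry $t \mapsto 1-t$ (which swaps the roles of $i=0$ and $i=2n$, and of $\mathcal{H}_1$ and $\mathcal{H}_2$), it suffices to treat $i=0$; then only condition \eqref{eq:disttowedge2} is available, namely $\mathrm{dist}(d_2, \mathcal{H}_2) = \phi(\alpha)^2/\sqrt{\alpha}$.

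Set $r := (s_1+s_2)/2$, $J_1 := [s_1, r]$, $J_2 := [r, s_2]$. Since $W' \subseteq \mathcal{H}_2$, one has the inclusion $A \cap R_\alpha[s_1,s_2] \subseteq D_2 \cap R_\alpha[J_2]$, where $D_2 := \{B(t) \in \mathcal{H}_2 \text{ for all } t \in J_2\}$; no use is made of the behavior of $B$ on $J_1$ (that half is bounded trivially by $1$). Conditioning on $B(r)$ and using the conditional independence of the Brownian bridges on $J_1$ and on $J_2$,
\[
\mathbb{P}(A \cap R_\alpha[s_1,s_2] \mid E_1, E_2) \leq \mathbb{E}\bigl[\mathbb{P}(D_2 \cap R_\alpha[J_2] \mid E_2, B(r)) \bigm| E_1, E_2\bigr].
\]

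For the inner probability, apply time reversal: the conditional Brownian bridge on $J_2$ from $B(r)$ to $d_2$ has the same law as one from $d_2$ to $B(r)$, so that the ``starting endpoint'' for the purpose of applying Lemma \ref{lem:mainest} is $d_2$, which sits at distance $\phi(\alpha)^2/\sqrt{\alpha}$ from $\partial \mathcal{H}_2$. Viewing $\mathcal{H}_2$ as a degenerate wedge of opening angle $\pi$ (so $\theta = 0$ in the notation of the lemma), and using $Y_\alpha[J_2] \subseteq R_\alpha[J_2]$ to verify the displacement hypothesis $|B(r) - d_2| < \phi(\alpha)\sqrt{|J_2|}$ (up to an asymptotically negligible additive $\alpha^{-2n-1}$), Lemma \ref{lem:mainest} applied with parameter $\varepsilon/2$ yields
\[
\mathbb{P}(D_2 \cap R_\alpha[J_2] \mid E_2, B(r)) \leq \alpha^{\varepsilon/2} \max\!\left(\alpha^{-1},\ \frac{\phi(\alpha)^2/\sqrt{\alpha}}{\sqrt{(s_2-s_1)/2}}\right) \leq \frac{\sqrt{2}\, \phi(\alpha)^2\, \alpha^{\varepsilon/2}}{\sqrt{\alpha(s_2-s_1)}}.
\]

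Taking the outer expectation and absorbing the sub-polynomial factor $\sqrt{2}\,\phi(\alpha)^2 = \sqrt{2}\, e^{2\sqrt{\log\alpha}}$ into $\alpha^{\varepsilon/2}$ (valid for all $\alpha > C(\varepsilon)$) delivers the claimed bound $\alpha^\varepsilon/\sqrt{\alpha(s_2-s_1)}$. The only nontrivial point is the one already present in the interior case: verifying that the oscillation hypothesis of Lemma \ref{lem:mainest} is met almost surely under $R_\alpha[J_2]$ for the conditional Brownian bridge pointwise in $B(r)$, which is routine in view of the definition of $Y_\alpha[J_2]$; the rest is bookkeeping.
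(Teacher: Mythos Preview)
Your proof is correct and follows essentially the same approach as the paper: drop the half of the interval for which no boundary condition is available, and apply Lemma~\ref{lem:mainest} with $\theta=0$ to the remaining half using the single available half-space condition. The paper's proof is a two-sentence sketch (``ignore the interval $[(s_1+s_2)/2,s_2]$, otherwise proceed as in the previous lemma''); you have simply written out the details, including the conditioning on $B(r)$ and the time-reversal to place $d_2$ at the ``starting'' endpoint.
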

\begin{proof}
If only the condition \eqref{eq:disttowedge1} holds, then we simply ignore the interval $[(s_{1}+s_{2})/2,s_{2}]$, otherwise proceeding as in the previous lemma. Analogously, if \eqref{eq:disttowedge2} holds, then we ignore the other interval.
\end{proof}

We now move on to the improved bounds \eqref{eq:boundH3} and \eqref{eq:boundH4}, which rely on the additional assumption \eqref{eq:condj}. Thus, in what follows we will also make the assumption 
\begin{equation} \label{eq:condj2}
s_{2}-s_{1}\geq\alpha^{1/(10n)}\max\left(\min(|d_1-w_{0}|,|d_2-w_{0}|)^{2},\frac{1}{\alpha}\right).
\end{equation}
Those lemmas lie in the heart of our argument. This is the only part of the proof where we exploit the fact that the opening angle of the wedge is strictly smaller than $\pi$. The next lemma implies the bound \eqref{eq:boundH3}.
\begin{lem}
With the above notation, assume that the conditions \eqref{eq:disttowedge1}, \eqref{eq:disttowedge2} and \eqref{eq:condj2} hold. For every $\varepsilon>0$, there exists $C>0$ such that, for any $\alpha>C$,
\[
\mathbb{P}\big(A\cap R_{\alpha}[s_{1},s_{2}]|E_{1},E_{2}\big)\leq\frac{\alpha^{\varepsilon}}{(s_{2}-s_{1})\alpha} \alpha^{-\frac{\theta}{400n}}.
\]
\end{lem}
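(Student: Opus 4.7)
\medskip
\noindent\textbf{Proof proposal.} The plan is to mimic the structure of the two preceding lemmas: split $[s_1,s_2]$ into the two halves $J_1=[s_1,r]$ and $J_2=[r,s_2]$ with $r=(s_1+s_2)/2$, condition on $B(r)$, and exploit the independence of the two resulting Brownian bridges. By the symmetry of our hypotheses we may assume $|d_1-w_0|\leq|d_2-w_0|$, so that the extra smallness provided by \eqref{eq:condj2} is available at the endpoint $d_1$. On the interval $J_2$ we apply Lemma \ref{lem:mainest} verbatim to the half-space $\mathcal{H}_2$, exactly as in the proof of the preceding lemma, producing the bound $\alpha^{\varepsilon/3}\phi(\alpha)^2/\sqrt{(s_2-s_1)\alpha/2}$. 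The novelty is that on $J_1$ we apply Lemma \ref{lem:mainest} to the \emph{full} wedge $W'$ of opening angle $\pi-\theta$, so that the bound comes with the crucial exponent $1+\theta/20$.

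To make this work, let $w_0'$ denote the tip of $W'$, so that $|w_0-w_0'|\leq C_\theta\,\phi(\alpha)^2/\sqrt{\alpha}$ for a constant $C_\theta$ depending only on $\theta$. Condition \eqref{eq:condj2} supplies both $|d_1-w_0|\leq \alpha^{-1/(20n)}\sqrt{s_2-s_1}$ and the lower bound $(s_2-s_1)\alpha\geq \alpha^{1/(10n)}$; combining these with the triangle inequality $|d_1-w_0'|\leq|d_1-w_0|+|w_0-w_0'|$ yields
\[
x \;:=\; \frac{|d_1-w_0'|}{\sqrt{(s_2-s_1)/2}} \;\leq\; C_\theta\,\phi(\alpha)^2\,\alpha^{-1/(20n)}.
\]
On the event $R_\alpha[s_1,r]$ the oscillation bound $|B(r)-d_1|\leq\phi(\alpha)\sqrt{r-s_1}$ holds, so hypothesis \eqref{eq:condosc} of Lemma \ref{lem:mainest} is satisfied and we obtain the bound $\alpha^{\varepsilon/3}\max(\alpha^{-1},x)^{1+\theta/20}$ for the probability that the bridge on $J_1$ stays in $W'$.

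The decisive bookkeeping step is to split $x^{1+\theta/20}=x\cdot x^{\theta/20}$ and match each piece to the target. The first factor $x$, also bounded by $C_\theta\,\phi(\alpha)^2/\sqrt{(s_2-s_1)\alpha}$ (which follows from the lower bound on $(s_2-s_1)\alpha$), combines with the $J_2$ half-space bound to reproduce the $1/((s_2-s_1)\alpha)$ appearing in the preceding lemma, while the second factor $x^{\theta/20}\leq \bigl(C_\theta\,\phi(\alpha)^2\alpha^{-1/(20n)}\bigr)^{\theta/20}$ supplies the promised $\alpha^{-\theta/(400n)}$. All polylogarithmic factors $\phi(\alpha)$ and the constants $C_\theta$ are absorbed into the global $\alpha^{\varepsilon}$ by requiring $\alpha>C$ large enough.

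The main obstacle I anticipate is the passage from the original wedge $W$ with tip $w_0$, where \eqref{eq:condj2} is formulated, to the enlarged wedge $W'$ with tip $w_0'$, in which the Brownian motion actually lives. If the shift $|w_0-w_0'|\sim\phi(\alpha)^2/\sqrt{\alpha}$ were not negligible compared to $\sqrt{s_2-s_1}\,\alpha^{-1/(20n)}$, the starting point $d_1$ could fail to be close to the tip of $W'$ and the wedge estimate would collapse back to the half-space estimate, losing the crucial $\alpha^{-\theta/(400n)}$. The lower bound $s_2-s_1\geq\alpha^{1/(10n)-1}$---the $1/\alpha$ branch of the maximum in \eqref{eq:condj2}---is precisely what prevents this degeneration.
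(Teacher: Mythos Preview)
Your two-interval split has a genuine gap. The false step is the claim that the first factor $x=|d_1-w_0'|/\sqrt{(s_2-s_1)/2}$ is also bounded by $C_\theta\,\phi(\alpha)^2/\sqrt{(s_2-s_1)\alpha}$. Condition \eqref{eq:condj2} only gives $|d_1-w_0|\le \alpha^{-1/(20n)}\sqrt{s_2-s_1}$, and nothing forbids this upper bound from being attained while $s_2-s_1$ is of order $1$: then $x\asymp\alpha^{-1/(20n)}$, whereas your claimed bound is of order $\alpha^{-1/2}$. The lower bound $(s_2-s_1)\alpha\ge\alpha^{1/(10n)}$ goes the wrong way here---it makes $1/\sqrt{(s_2-s_1)\alpha}$ \emph{smaller}, not larger. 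In this regime your product of the $J_1$ and $J_2$ bounds is only of order $\alpha^{-1/(20n)(1+\theta/20)}\cdot\alpha^{-1/2}/\sqrt{s_2-s_1}$, which misses the target $1/((s_2-s_1)\alpha)$ by a factor roughly $\alpha^{1/2-1/(20n)}$.

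The underlying reason is that Lemma \ref{lem:mainest} measures only the distance of the starting point to the \emph{tip} of the wedge; your $J_1$ application therefore discards condition \eqref{eq:disttowedge1}, which says $d_1$ is at distance $\phi(\alpha)^2/\sqrt{\alpha}$ from the \emph{boundary} $\partial\mathcal{H}_1$. The paper recovers this information by inserting a third, short interval $[s_1,\ell]$ with $\ell=s_1+S^2$ and $S=\max(\alpha^{-1/2},|d_1-w_0|)$: on $[s_1,\ell]$ one applies the half-space estimate for $\mathcal{H}_1$ using \eqref{eq:disttowedge1}, yielding a factor $\phi(\alpha)^2\alpha^{-1/2}/S$; on the middle interval $[\ell,r]$ the Brownian bridge now starts within $\sim\phi(\alpha)S$ of the tip $w_0$, so the wedge estimate gives $(\phi(\alpha)S/\sqrt{s_2-s_1})^{1+\theta/20}$. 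The $S^{-1}$ from the first piece cancels the $S^1$ from the second, leaving precisely $\frac{1}{(s_2-s_1)\alpha}\bigl(S/\sqrt{s_2-s_1}\bigr)^{\theta/20}$, and only then does \eqref{eq:condj2} finish the job.
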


\begin{proof}
Since the distribution of a Brownian bridge is invariant under time reversal (up to the initial conditions), we may assume without loss of generality that $|d_1-w_{0}| \leq |d_2-w_{0}|$. Denote $S^{2}=max \left (\frac{1}{\alpha},|d_1-w_{0}|^{2} \right )$. Set $\ell:=s_{1}+S^{2},\ r:=\frac{s_{1}+s_{2}}{2}$. Remark that
$$
A \subset \left \{ B(t)\in \mathcal{H}_{1},~~\forall t\in[s_{1},\ell]\right\} \cap \left \{ B(t)\in W^{'},~~\forall t\in[\ell,r]\right\} \cap \left\{ B(t)\in \mathcal{H}_{2},~~\forall t\in[r,s_{2}]\right\}.
$$
This time we
condition on $B(\ell)$ and on $B(r)$. By independence of the conditioned Brownian motion on the disjoint intervals, we get
\begin{align} \label{eq:indep}
 \mathbb{P}\big( &A\cap R_{\alpha}[s_{1},s_{2}]|E_{1},E_{2}\big)\leq\\
 & \mathbb{\mathbb{E}}\bigg(\mathbb{P} \left (\left\{ B(t)\in \mathcal{H}_{1},~~\forall t\in[s_{1},\ell]\right\} \cap R_{\alpha}[s_{1},\ell]|E_{1},B(\ell)\right )\times \nonumber \\
 & ~~~~ \mathbb{P} \left (\left\{ B(t)\in W^{'},~~\forall t\in[\ell,r]\right\} \cap R_{\alpha}[\ell,r]|B(\ell),B(r)\right )\times \mathbf{1}_G \times \nonumber \\
 & ~~~~ \mathbb{P}\left (\left\{ B(t)\in \mathcal{H}_{2},~~\forall t\in[r,s_{2}]\right\} \cap R_{\alpha}[r,s_{2}]|E_{2},B(r)\right ) \bigg | E_1, E_2 \bigg), \nonumber \\
\end{align}
where
$$
G := \left \{ |B(\ell) - B(s_1)| \leq \alpha^{-2n-1} + \phi(\alpha) S \right \} \supset R_\alpha[s_1,\ell].
$$
We invoke Lemma \ref{lem:mainest} for each one of the three intervals $[s_1, \ell]$, $[\ell, r]$ and $[r, s_2]$. For the first and last intervals, we use the half-spaces $\mathcal{H}_1$ and $\mathcal{H}_2$ respectively in place of the wedge $W$, invoking the lemma with $\theta = 0$. Using $\varepsilon/4$ in place of $\varepsilon$, we get the bounds
\begin{align}\label{eq:boundI1}
P\bigg(\left\{ B(t)\in \mathcal{H}_{1},~~\forall t\in[s_{1},\ell]\right\} \cap R_{\alpha}[s_{1},\ell] \bigg |E_{1},B(\ell)\bigg) ~& \leq \frac{\alpha^{\frac{\varepsilon}{4}} max(\alpha^{-1}, \alpha^{-\frac{1}{2}}\phi(\alpha)^{2})}{S} \\
& \leq \frac{\alpha^{\frac{\varepsilon}{4}} \alpha^{-\frac{1}{2}}\phi(\alpha)^{2}}{S} \nonumber
\end{align}
and
\begin{equation}\label{eq:boundI2}
P\bigg(\left\{ B(t)\in \mathcal{H}_{2},~~\forall t\in[r, s_2]\right\} \cap R_{\alpha}[r,s_2] \bigg |E_{2},B(r)\bigg)\leq \frac{\alpha^{\frac{\varepsilon}{4}} \alpha^{-\frac{1}{2}}\phi(\alpha)^{2}}{\sqrt{\frac{s_{2}-s_{1}}{2}}}.
\end{equation}
In the above, note that the condition \eqref{eq:condosc} is verified since, under the event $R_\alpha[s_1,\ell]$ we have $|B(\ell)-B(s_1)| < \phi(\alpha) \sqrt{\ell-s_1}$ and an analogous reasoning holds for the second application of the lemma.

We would now like to invoke the lemma for the middle interval, with the wedge being $W'$ (this is the only time we invoke the lemma using a wedge which is not a half-space). First, remark that under the event $G$, we have
\begin{align*}
 |B(\ell)-w_{0}| ~& \leq|B(s_{1}+S^{2})-B(s_{1})|+|B(s_{1})-w_{0}| \\
 & \leq \alpha^{-2n-1}+\phi(\alpha)S+S \\
 & \leq 2\phi(\alpha)S.
\end{align*}
We invoke Lemma \ref{lem:technicalwedge} with the choice $r = 2\phi(\alpha)S$. We end up with the almost-sure bound
\begin{align*}
P\bigg(\left\{ B(t)\in W^{'},~~\forall t\in[\ell,r]\right\} \cap R_{\alpha}[\ell,r]\bigg|B(\ell),B(r)\bigg) \times \mathbf{1}_G~ \\
\leq  \frac{\alpha^{\frac{\varepsilon}{4}}max(\alpha^{-1},2\phi(\alpha)S)^{1+\frac{\delta}{20}}}{\sqrt{\frac{s_{2}-s_{1}}{2}-S^{2}}} \leq\frac{\alpha^{\frac{\varepsilon}{4}} \left (2\phi(\alpha)S\right ) ^{1+\frac{\theta}{20}}}{\sqrt{\frac{s_{2}-s_{1}}{4}}}.
\end{align*}
(in the last inequality we used the fact that for a sufficiently large choice of $C < \alpha$, we have $max(\alpha^{-1},2\phi(\alpha)S)=2\phi(\alpha)S$ and the fact that, by definition, $S^2 \leq \alpha^{-1/(10n)} (s_2-s_1)$). By combining the last display with \eqref{eq:indep}, \eqref{eq:boundI1} and \eqref{eq:boundI2}, we get
\[
\mathbb{P}\big(A\cap R_{\alpha}[s_{1},s_{2}]|E_{1},E_{2}\big)\leq\frac{\alpha^{\frac{\varepsilon}{4}-\frac{1}{2}}\phi(\alpha)^{2}}{S}\times\alpha^{\frac{\varepsilon}{4}}\left(\frac{2\phi(\alpha)S}{\sqrt{\frac{s_{2}-s_{1}}{4}}}\right)^{1+\frac{\theta}{20}}\times\frac{\alpha^{\frac{\varepsilon}{4}-\frac{1}{2}}\phi(\alpha)^{2}}{\sqrt{\frac{s_{2}-s_{1}}{2}}}.
\]
Since for every $\varepsilon > 0$, for sufficiently large $\alpha$, one has $16 \phi(\alpha)^6 \leq \alpha^{\varepsilon / 4}$, we get
$$
\mathbb{P}\big(A\cap R_{\alpha}[s_{1},s_{2}]|E_{1},E_{2}\big)\leq \frac{\alpha^\varepsilon}{\alpha (s_2 - s_1)} \left (\frac{S}{\sqrt{s_{2}-s_{1}}}\right )^{\frac{\theta}{20}}.
$$
Finally, the assumption \eqref{eq:condj2} implies
\[
\frac{S}{\sqrt{s_{2}-s_{1}}}\leq \alpha^{-\tfrac{1}{20n}}.
\]
The combination of the two above displays finishes the proof.
\end{proof}
We again have an analogous lemma for \textquotedbl{}edge\textquotedbl{}
intervals: 
\begin{lem}
With the above notation, assume that one of the two conditions \eqref{eq:disttowedge1} and \eqref{eq:disttowedge2} hold, and assume further that \eqref{eq:condj2} holds. Then, for every $\varepsilon>0$, there exists $C>0$ such that for any $\alpha>C$:
\[
\mathbb{P}\bigg(A\cap R_{\alpha}[s_{1},s_{2}] \bigg | E_{1}, E_2 \bigg)\leq\frac{\alpha^{\varepsilon}}{\sqrt{(s_{2}-s_{1})\alpha}} \alpha^{-\frac{\theta}{400n}}.
\]
\end{lem}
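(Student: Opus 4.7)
The plan is to adapt the proof of the preceding lemma to the edge case, where only one of the two half-spaces $\mathcal{H}_1,\mathcal{H}_2$ is available as a constraint. The loss of the second half-space is precisely what accounts for the weakening of the bound by a factor of $\sqrt{(s_2-s_1)\alpha}$ relative to \eqref{eq:boundH3}: the proof now splits $[s_1,s_2]$ into two sub-intervals instead of three.

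By invariance of the Brownian bridge under time reversal, together with the corresponding invariance of the events $A$ and $R_\alpha[s_1,s_2]$, I may assume without loss of generality that condition \eqref{eq:disttowedge1} is the one in force, so $d_1$ lies within distance $\phi(\alpha)^2/\sqrt{\alpha}$ of $\partial\mathcal{H}_1$. Set $S^2:=\max\bigl(\alpha^{-1},\min(|d_1-w_0|,|d_2-w_0|)^2\bigr)$, which by \eqref{eq:condj2} satisfies $S \leq \alpha^{-1/(20n)}\sqrt{s_2-s_1}$, and let $\ell:=s_1+S^2$. Conditioning on $B(\ell)$ and invoking independence of the Brownian bridge on disjoint sub-intervals, I would bound the desired probability by
$$
\mathbb{E}\bigl[P_1\cdot P_2 \cdot \mathbf{1}_G \bigm| E_1,E_2\bigr],
$$
where $P_1:=\mathbb{P}\bigl(\{B(t)\in\mathcal{H}_1,\,\forall t\in[s_1,\ell]\}\cap R_\alpha[s_1,\ell]\bigm| E_1, B(\ell)\bigr)$, $P_2:=\mathbb{P}\bigl(\{B(t)\in W',\,\forall t\in[\ell,s_2]\}\cap R_\alpha[\ell,s_2]\bigm| E_2,B(\ell)\bigr)$, and $G:=\bigl\{|B(\ell)-d_1|\leq\phi(\alpha)S+\alpha^{-2n-1}\bigr\}\supseteq R_\alpha[s_1,\ell]$.

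Applying Lemma~\ref{lem:mainest} with $\theta=0$ to the half-space $\mathcal{H}_1$ on $[s_1,\ell]$ yields $P_1 \leq \alpha^{\varepsilon/3}\phi(\alpha)^2/(\sqrt{\alpha}\,S)$. For $P_2$ I would distinguish two sub-cases according to which endpoint is closer to $w_0$. If $|d_1-w_0|\leq|d_2-w_0|$, apply Lemma~\ref{lem:mainest} to the forward bridge with starting point $B(\ell)$, using $G$ to enforce $|B(\ell)-w_0|\leq 2\phi(\alpha)S$. If instead $|d_2-w_0|<|d_1-w_0|$, exploit the time-reversal symmetry of the bridge \emph{restricted to} $[\ell,s_2]$ and apply the lemma with starting point $d_2$, for which $|d_2-w_0|\leq S$ holds by construction. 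Either sub-case gives $P_2 \leq \alpha^{\varepsilon/3}\bigl(C\phi(\alpha)S/\sqrt{s_2-s_1}\bigr)^{1+\theta/20}$. Multiplying the two estimates produces the key factor $(S/\sqrt{s_2-s_1})^{\theta/20} \leq \alpha^{-\theta/(400n)}$ from \eqref{eq:condj2}, the residual powers of $\phi(\alpha)$ are absorbed into $\alpha^\varepsilon$, and one arrives at the claimed bound.

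The main obstacle is the sub-case $|d_2-w_0|<|d_1-w_0|$: since the only available half-space constraint $\mathcal{H}_1$ is tied to the ``wrong'' endpoint, a \emph{local} time-reversal restricted to the sub-interval $[\ell,s_2]$ is required in order to exploit the tip-closeness of $d_2$. A global time-reversal is unavailable here, because $\mathcal{H}_2$ is not defined at the edge index, and hence the symmetry between the two endpoints that was used in the interior case is broken.
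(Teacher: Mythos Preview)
Your proposal is correct and follows the paper's approach: the paper's two-sentence proof says to proceed as in the preceding interior lemma but simply drop the sub-interval tied to the missing half-space (and reduce the case \eqref{eq:disttowedge2} to \eqref{eq:disttowedge1} by the global swap $s_1\leftrightarrow s_2$).

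Your version is in fact more careful than the paper's sketch on one point. The paper retains the three-interval partition $[s_1,\ell],\,[\ell,r],\,[r,s_2]$ and discards the last piece, implicitly inheriting the reduction $|d_1-w_0|\leq|d_2-w_0|$ from the interior lemma. But that reduction was obtained there by a \emph{global} time-reversal, which in the edge setting would exchange which of $\mathcal{H}_1,\mathcal{H}_2$ is defined and is therefore no longer freely available once \eqref{eq:disttowedge1} has been fixed. You handle this by extending the wedge sub-interval all the way to $[\ell,s_2]$ so that $d_2$ becomes an endpoint, and then invoking a \emph{local} time-reversal of the bridge on $[\ell,s_2]$ in the sub-case $|d_2-w_0|<|d_1-w_0|$, placing the tip-near point at the start before applying Lemma~\ref{lem:mainest}. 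This is a clean way to complete the argument that the paper's terse proof leaves implicit.
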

\begin{proof}
We proceed as in the previous lemma. If only the condition \eqref{eq:disttowedge1} holds, then we simply ignore the interval $[r,s_{2}]$. Analogously, if \eqref{eq:disttowedge2} holds, we change the order $s_1 \leftrightarrow s_2$ and proceed as before.
\end{proof}

By combining the lemmas of this subsection, we have now completed the proof of Proposition \ref{prop:bounds}.

\section{Proof of Theorem \ref{thm:main}}
In light of Propositions \ref{prop:reduction} and \ref{prop:tildec}, the proof of the main theorem is reduced to estimating an explicit integral over $\Delta_n \times \Delta_n$. 
Define 
\begin{equation} \label{eq:defZa}
Z_{a}=\Bigl\{(z_{1},\dots,z_{2n});z_{1}\geq a,~1-z_{2n}\geq a\mbox{ and }\forall2\leq j\leq2n,~z_{j}-z_{j-1}\geq a\}.
\end{equation}
We will need the following lemma.
\begin{lem} \label{lem:integral}
For $(r,s) \in \Delta_n \times \Delta_n$, recall that $\mathbf{t}(r,s)=(t_{1} (r,s),...,t_{2n} (r,s))$ are the elements of $r \cup s$ in increasing order. For every dimension $n$ there exists $C>0$ such that for any $0<a<e^{-1}$ we have
\[
\int_{\Delta_{n}\times\Delta_{n}}\frac{\mathbf{1}_{\{\mathbf{t}(r,s) \in Z_{a}\}}}{\sqrt{t_{1} (r,s) (1-t_{2n} (r,s))}}\prod_{i=2}^{2n}\frac{1}{t_{i} (r,s) -t_{i-1} (r,s)} dr ds\leq C|\log(a)|^{C}
\]
and 
\[
\int_{\Delta_{n}\times\Delta_{n}}\mathbf{1}_{\{\mathbf{t}(r,s) \notin Z_{a}\}}dr ds\leq Ca.
\]
\end{lem}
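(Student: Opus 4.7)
The plan is to reduce both integrals to integrals over the standard simplex $\Delta_{2n}$ via the observation that the map $(r,s)\in\Delta_{n}\times\Delta_{n}\mapsto\mathbf{t}(r,s)\in\Delta_{2n}$ is measure-preserving and $\binom{2n}{n}$-to-one (each fiber corresponds to a choice of which $n$ among the sorted values come from $r$), so
\begin{equation*}
\int_{\Delta_{n}\times\Delta_{n}}F\bigl(\mathbf{t}(r,s)\bigr)\,dr\,ds=\binom{2n}{n}\int_{\Delta_{2n}}F(t_{1},\dots,t_{2n})\,dt.
\end{equation*}
I then pass to the gap variables $u_{1}:=t_{1}$, $u_{i}:=t_{i}-t_{i-1}$ for $2\leq i\leq 2n$, and $u_{2n+1}:=1-t_{2n}$: this is a unit-Jacobian change of coordinates under which $\Delta_{2n}$ becomes $\{u\in\mathbb{R}^{2n+1}_{\geq 0}:\sum_{i}u_{i}=1\}$ (parametrized by $u_{1},\dots,u_{2n}$), and the condition $\mathbf{t}\in Z_{a}$ becomes simply $u_{i}\geq a$ for every $1\leq i\leq 2n+1$.

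For the second inequality, $Z_{a}^{c}$ corresponds to the union of the $2n+1$ slabs $\{u_{i}<a\}$. Each such slab has Lebesgue measure at most $\frac{a}{(2n-1)!}$ inside the simplex (fix the small coordinate in $[0,a)$; the remaining coordinates then live in a $(2n-1)$-simplex of volume at most $\frac{1}{(2n-1)!}$). A union bound yields $|Z_{a}^{c}\cap\Delta_{2n}|\leq\frac{(2n+1)a}{(2n-1)!}$, and the factor $\binom{2n}{n}$ is absorbed into the constant.

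For the first inequality, the integrand becomes $\frac{1}{\sqrt{u_{1}u_{2n+1}}}\prod_{i=2}^{2n}\frac{1}{u_{i}}$. I integrate out $u_{1}$ first: fixing $u_{2},\dots,u_{2n}$ and writing $V:=1-(u_{2}+\cdots+u_{2n})$ so that $u_{2n+1}=V-u_{1}$, the trigonometric substitution $u_{1}=V\sin^{2}\phi$ gives
\begin{equation*}
\int_{a}^{V-a}\frac{du_{1}}{\sqrt{u_{1}(V-u_{1})}}\leq\int_{0}^{V}\frac{du_{1}}{\sqrt{u_{1}(V-u_{1})}}=\pi.
\end{equation*}
This Beta-integral identity is the crux of the argument: the two $1/\sqrt{\cdot}$ endpoint singularities, coupled by the constraint $u_{1}+u_{2n+1}=V$, collapse into a single universal constant. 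The remaining integral is then bounded by dropping the constraint $\sum_{i=2}^{2n}u_{i}\leq 1-2a$ and letting each $u_{i}$ range over $[a,1]$ independently, yielding $\prod_{i=2}^{2n}\int_{a}^{1}\frac{du_{i}}{u_{i}}=|\log a|^{2n-1}$. Combining with the prefactor $\pi\binom{2n}{n}$ gives the claimed $C|\log a|^{C}$ bound with, say, $C=\max\bigl(\pi\binom{2n}{n},\,2n-1\bigr)$. The main obstacle is precisely this coupled pair of boundary singularities: decoupling them via Cauchy--Schwarz would leave a factor that is not integrable on the simplex, whereas the Beta identity handles them exactly.
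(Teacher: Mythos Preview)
Your argument is correct and follows the paper's proof closely: the same $\binom{2n}{n}$-to-one reduction from $\Delta_n\times\Delta_n$ to $\Delta_{2n}$, the same passage to gap variables, and the same union bound for the second inequality. The one genuine difference is in how you treat the factor $\tfrac{1}{\sqrt{u_1 u_{2n+1}}}$. The paper applies the arithmetic--geometric mean inequality $\tfrac{1}{\sqrt{u_1 u_{2n+1}}}\leq\tfrac12\bigl(\tfrac{1}{u_1}+\tfrac{1}{u_{2n+1}}\bigr)$, which after the change of variables and enlarging the domain to $[a,1]^{2n}$ yields a bound of order $|\log a|^{2n}$. You instead integrate $u_1$ out first via the Beta identity $\int_0^V\tfrac{du_1}{\sqrt{u_1(V-u_1)}}=\pi$, which is cleaner and gives $|\log a|^{2n-1}$. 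Either is perfectly adequate for the application, since only a polylogarithmic bound is needed.

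One small correction to your commentary: your remark that ``decoupling them via Cauchy--Schwarz would leave a factor that is not integrable on the simplex'' is not accurate. The AM--GM decoupling the paper uses is exactly this kind of move, and on the truncated region $Z_a$ the resulting $\tfrac{1}{u_1}$ term integrates to $|\log a|$, which is finite. So decoupling does work; it simply costs one extra logarithmic factor compared to your Beta-integral approach.
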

The proof of this lemma is postponed to the appendix. We are ready to finish the proof of our main theorem.
\begin{proof}[Proof of Theorem \ref{thm:main}]
Let $Z_{a}$ be defined as in Equation \eqref{eq:defZa}. Fix $\kappa>0$ and let $\alpha$ be large enough so that the bound given by Proposition \ref{prop:tildec} is valid, in other words,
\[
\mathbb{P}\left(\tilde{\CC}_{\alpha,\kappa}(r,s)\right)\leq\alpha^{-2n-1}+\alpha^{-2n}\alpha^{-\kappa/(16000n)}\frac{1}{\sqrt{t_{1} (r,s) (1-t_{2n} (r,s))}}\prod_{i=2}^{2n}\frac{1}{t_{i}(r,s)-t_{i-1}(r,s)}.
\]
Together with the bounds
given by Lemma \ref{lem:integral}, we have 
\begin{align*}
\alpha^{2n}\int_{\Delta_{n}\times\Delta_{n}} ~& \mathbb{P}\left(\tilde{\CC}_{\alpha,\kappa}(r,s)\right)drds \\
& \leq \alpha^{-1}+\alpha^{-\kappa/(16000n)}\int_{\Delta_{n}\times\Delta_{n}}\frac{\mathbf{1}_{\{\mathbf{t}(r,s) \in Z_{\alpha^{-2n-1}}\}}}{\sqrt{t_{1} (r,s) (1-t_{2n} (r,s))}}\prod_{i=2}^{2n}\frac{1}{t_{i} (r,s) -t_{i-1} (r,s)} dr ds \\
& ~~~ +\alpha^{2n}\int_{\Delta_{n}\times\Delta_{n}}\mathbf{1}_{\{\mathbf{t}(r,s) \notin Z_{\alpha^{-2n-1}}\}}dr ds \\
& \leq\alpha^{-1}+C (2n+1) \alpha^{-\kappa/(16000n)}\log(\alpha){}^{C}+C\alpha^{-1},
\end{align*}
for some constant $C>0$ which depends only on $n,\kappa$. We conclude that 
\[
\alpha^{2n}\int_{\Delta_{n}\times\Delta_{n}}\mathbb{P}\left(\tilde{\CC}_{\alpha,\kappa}(r,s)\right)drds\xrightarrow{\alpha\to\infty}0.
\]
An application of Proposition \ref{prop:reduction} finishes the proof. 
\end{proof}

\section{Appendix: Proofs of technical results}
\begin{proof}[Proof of Lemma \ref{lem:Ralpha}]
	We will show that $\mathbb{P}(Y_{\alpha}^{C})$ and $\mathbb{P}(N_{\alpha}^{C})$
	both satisfy the above seperately, and then apply a union bound. To
	see that this is the case for $\mathbb{P}\big(Y_{\alpha}^{C}\big)$,
	we make use of the following inequality, whose proof can be found
	in \cite{FN}: 
	\[
	E\big(M_\delta^{p}\big)\leq K_{p}\big(\delta ln(1/\delta)\big)^{p/2},
	\]
	for some constant $K_{p}$, valid for every $p>1$ and $\delta<1/e$.
	We take $\delta=\alpha^{-6n-3},p=4$ and conclude that by Markov's
	inequality we have 
	\[
	\mathbb{P}\big(M_{\alpha^{-6n-3}}>\alpha^{-2n-2}\big)=\mathbb{P}\big(M_{\alpha^{-6n-3}}^{4}>\alpha^{-8n-8}\big)\leq
	\]
	\[
	\frac{K_{4}(\alpha^{-6n-3}\log(\alpha^{6n+3}))^{2}}{\alpha^{-8n-8}}=K_{4}\alpha^{-4n+2}\log(\alpha^{6n+3})^{2}.
	\]
	
	So clearly $\mathbb{P}\big(M(\alpha^{-6n-3})>\alpha^{-2n-2}\big)=o(\alpha^{-2n-1})$.
	Therefore $\mathbb{P}\big(M(\delta)\leq\delta^{1/2}\phi(\alpha)+\alpha^{-2n-1},~~\forall\delta<\alpha^{-6n-3}\big)=1-o(\alpha^{-2n-1})$.
	Now let $g_{\alpha}=\lceil\alpha^{6n+4}\rceil$, and consider the
	collection of points $I_{\alpha}=\left\{ \frac{k}{g_{\alpha}}\mid k\in\mathbb{Z},0\leq k\leq g_{\alpha}\right\} $.
	For any pair of points $\frac{k_{1}}{g_{\alpha}},\frac{k_{2}}{g_{\alpha}}\in I_{\alpha}$
	we have: 
	\[
	\mathbb{P}\left ( \left |B(\frac{k_{1}}{g_{\alpha}})-B(\frac{k_{2}}{g_{\alpha}}) \right |>\log(\alpha)\sqrt{\frac{|k_{1}-k_{2}|}{g_{\alpha}}}\right )=\mathbb{P}\big(X>\log(\alpha)\big),
	\]
	
	where $X\sim N(0,1)$. By a standard estimate for Gaussian random
	variables we have 
	\[
	\mathbb{P}\big(X>\log(\alpha)\big)\leq\frac{1}{\sqrt{2\pi}\log(\alpha)}\exp \left (\frac{-\log(\alpha)^{2}}{2}\right ),
	\]
	
	therefore, by a union bound, we have 
	\[
	\mathbb{P}\left (\exists k_{1},\ k_{2}\in I_{\alpha}s.t\ \ \left |B(\frac{k_{1}}{g_{\alpha}})-B(\frac{k_{2}}{g_{\alpha}})\right |>\log(\alpha)\sqrt{\frac{|k_{1}-k_{2}|}{g_{\alpha}}}\right )\leq
	\]
	\[
	g_{\alpha}^{2}\frac{1}{\sqrt{2\pi}\log(\alpha)}\exp \left (\frac{-\log(\alpha)^{2}}{2} \right )=o(\alpha^{-2n-1}).
	\]
	
	For any $x\in\left[0,1\right]$, let $I_{\alpha}(x)$ be a point in
	$I_{\alpha}$ of minimal distance to $x$. Clearly $|x-I_{\alpha}(x)|\leq\alpha^{-6n-4}$.
	Thus we have, with probability at least $1-o(\alpha^{-2n-1})$, that
	for any $s,t\in\left[0,1\right]$ such that $|s-t|>\alpha^{-6n-3}$:
	\[
	|B(s)-B(t)|\leq|B(s)-B(I_{\alpha}(s))|+|B(I_{\alpha}(s))-B(I_{\alpha}(t))|+|B(I_{\alpha}(t))-B(t)|\leq
	\]
	\[
	2\alpha^{-2n-2}+\log(\alpha)\sqrt{|I_{\alpha}(s)-I_{\alpha}(t)|}\leq\alpha^{-2n-1}+2\log(\alpha)\sqrt{|s-t|}\leq\alpha^{-1}+\phi(\alpha)|s-t|,
	\]
	
	where the final two inequalities hold for $\alpha$ sufficiently large.
	We conclude by a union bound that $\mathbb{P}\big(Y\big)=o(\alpha^{-2n-1})$.
	As for $\mathbb{P}\big(N_{\alpha}^{C}\big)$, let $f_{\alpha}=\lceil\frac{\alpha}{\phi(\alpha)}\rceil^{-1}$,
	and consider the points $0,f_{\alpha},2f_{\alpha},...,(f_{\alpha}^{-1}-1)f_{\alpha},1$.
	For each $1\leq j\leq f_{\alpha}^{-1}$, we have that with probility
	$1-e^{-\alpha f_{\alpha}}=1-o(\alpha^{-2n-2})$, the Poisson process
	$\Lambda_{\alpha}$ has at least one point in the interval $\left[(j-1)f_{\alpha},jf_{\alpha}\right]$,
	so by a union bound with probability at least $1-o(\alpha^{-2n-1})$,
	$\Lambda_{\alpha}$ intersects all of these intervals, in which case
	$N_{\alpha}$ clearly holds. 
\end{proof}
\bigskip
\begin{proof}[Proof of Lemma \ref{lem:specialindex}]
	Assume towards condradiction that for all $0\leq j\leq2n-1$, one has that $t_{j+1}-t_{j}\leq\alpha^{1/(10n)}\max\big(|b_j-w_{0}|^{2},\frac{1}{\alpha}\big)$.
	Assume without loss of generality that $t_{j_{0}}\leq1/2$ (otherwise we can make the transformation $t\leftrightarrow1-t$),
	and consider the sequence $a_{1}=t_{j_{0}+1}-t_{j_{0}}$, $a_{2}=t_{j_{0}+2}-t_{j_{0}}$,...,$a_{m}=1-t_{j_{0}} \geq 1/2$
	where $m\leq2n$. For $1\leq i\leq m$ we have the inequalities:
	\[
	a_{i+1}-a_{i}=t_{i+1}-t_{i}\leq\alpha^{1/10n}\max\left (|b_i-w_{0}|^{2},\frac{1}{\alpha}\right ),
	\]
	and
	\begin{align*}
	|b_i-w_{0}|^{2}\leq\big(|b_i-b_{j_{0}}|+|b_{j_{0}}-w_{0}|\big)^{2} ~& \leq_{(1)} \left (\phi(\alpha)(t_{i}-t_{j_{0}})^{1/2}+\alpha^{-2n-1}+\text{M}\frac{\phi(\alpha)^{2}}{\sqrt{\alpha}}\right )^{2} \\
	& \leq_{(2)}16M^{2}\phi(\alpha)^{4}\max((t_{i}-t_{j_{0}}),(1/\alpha)),
	\end{align*}
	where the inequalities (1) and (2) follow from the property \eqref{eq:prop1} and
	the elemantary inequality $(a^{1/2}+b^{1/2})^{2}\leq4max(a,b)$ respectively.
	Thus we have, for all $0\leq i\leq m$,
	\[
	a_{i+1}-a_{i}\leq16M^{2}\alpha^{1/10n}\phi(\alpha)^{4}\max(a_{i},1/\alpha).
	\]
	
	Futhermore by assumption we have: 
	\begin{align*}
	a_{1}=t_{j_{0}+1}-t_{j_{0}} ~& \leq\alpha^{1/10n}\max\left (|b_{j_{0}}-w_{0}|^{2},\frac{1}{\alpha}\right ) \\
	& \leq \alpha^{1/10n}M^{2}\frac{\phi(\alpha)^{4}}{\alpha}.
	\end{align*}
	
	We claim that for $\alpha$ larger than some constant, this implies
	that $a_{i}<1/2$ for all $i$, which is a contradiction. Indeed,
	if $a_{i}\leq1/\alpha$ for all $i$ then we're done, assuming $\alpha>2$.
	Otherwise let $1\leq k\leq m$ be the minimal index such that $a_{k}>1/\alpha$.
	We then have 
	\[
	a_{k}\leq a_{1}+\frac{(k-1)16M^{2}\alpha^{1/10n}\phi(\alpha)^{4}}{\alpha}\leq32nM^{2}\alpha^{1/10n}\frac{\phi(\alpha)^{4}}{\alpha},
	\]
	and by induction
	\[
	a_{m}\leq a_{k}\big(16M^{2}\alpha^{1/10n}\phi(\alpha)^{4}+1\big)^{m-k}\leq32nM^{2}\alpha^{1/10n}\frac{\phi(\alpha)^{4}}{\alpha}\big(16M^{2}\alpha^{1/10n}\phi(\alpha)^{4}+1\big)^{2n}
	\]
	\[
	<\alpha^{-1+1/5n+2/5}<1/2,
	\]
	where the above inequalities hold for $\alpha$ larger than some constant. 
\end{proof}
\bigskip
\begin{proof}[Proof of Lemma \ref{lem:integral}]
	In what follows, we will allow ourselves to abbreviate $t_1 = t_1(r,s), t_2 = t_2(r,s)$ etc. To prove the first bound, we first note that the integrand is independent
	of the ordering of the elements in $r \cup s$. Furthermore, $\Delta_{n}\times\Delta_{n}$ can be written as the union
	of the following $\binom{2n}{n}$ sets, which are disjoint up to sets
	of measure zero: 
	\[
	\Delta_{n}\times\Delta_{n}=\bigcup_{1\leq k_{1}<...<k_{n}\leq2n}\left\{ (x_{1},..,x_{2n})\in\Delta_{n}\times\Delta_{n};x_{1}=t_{k_{1}},...,x_{n}=t_{k_{n}}\right\} .
	\]
	
	Noting that taking $k_{1}=1,...,k_{n}=n$ results in the domain $\Delta_{2n}\subseteq\Delta_{n}\times\Delta_{n}$,
	wherein we have $x_{i}=t_{i}$ for all $1\leq i\leq2n$, we get that
	\begin{align*}
	& \int_{\Delta_{n}\times\Delta_{n}}\frac{\mathbf{1}_{\{(t_{1},...,t_{2n})\in Z_{a}\}}}{\sqrt{t_{1}(1-t_{2n})}}\prod_{i=2}^{2n}\frac{1}{t_{i}-t_{i-1}}drds=\\
	& \binom{2n}{n}\int_{\Delta_{2n}}\frac{\mathbf{1}_{\{(x_{1},...,x_{2n})\in Z_{a}\}}}{\sqrt{x_{1}(1-x_{2n})}}\prod_{i=2}^{2n}\frac{1}{x_{i}-x_{i-1}}dx_{1}...dx_{2n}\leq\\
	& \binom{2n}{n}\int_{[0,1]^{2n}}\frac{\mathbf{1}_{\{(x_{1},...,x_{2n})\in Z_{a}\}}}{\sqrt{x_{1}(1-x_{2n})}}\prod_{i=2}^{2n}\frac{1}{|x_{i}-x_{i-1}|}dx_{1}...dx_{2n}\leq_{(*)}\\
	& \frac{1}{2}\binom{2n}{n}\int_{\left[0,1\right]^{2n}}\frac{\mathbf{1}_{\{(x_{1},...,x_{2n})\in Z_{a}\}}}{x_{1}}\prod_{i=2}^{2n}\frac{1}{|x_{i}-x_{i-1}|}dx_{1}\cdot\cdot\cdot dx_{2n}+\\
	& \frac{1}{2}\binom{2n}{n}\int_{\left[0,1\right]^{2n}}\frac{\mathbf{1}_{\{(x_{1},...,x_{2n})\in Z_{a}\}}}{1-x_{2n}}\prod_{i=2}^{2n}\frac{1}{|x_{i}-x_{i-1}|}dx_{1}\cdot\cdot\cdot dx_{2n},
	\end{align*}
	
	where ({*}) follows from the arithmetic-geometric mean inequality. We will prove the required
	bound for the first summand, with the second one being completely
	analogous. We make the change of variables $y_{1}=x_{1}$, $\forall2\leq j\leq2n,~y_{j}=x_{j}-x_{j-1}$:
	\begin{align*}
	& \int_{\left[0,1\right]^{2n}}\frac{\mathbf{1}_{\{(x_{1},...,x_{2n})\in Z_{a}^{'}\}}}{x_{1}}\prod_{i=2}^{2n}\frac{1}{|x_{i}-x_{i-1}|}dx_{1}\cdot\cdot\cdot dx_{2n}=\\
	& \int_{Q}\mathbf{1}_{\{(y_{1},...,y_{2n});\forall1\leq j\leq2n,\ y_{j}\geq a\}}\prod_{i=1}^{2n}\frac{1}{y_{i}}dy_{1}\cdot\cdot\cdot dy_{2n},
	\end{align*}
	
	where $Q:=\{(y_{1},...,y_{2n});\forall1\leq j\leq2n,\ \ 0\leq\sum_{1\leq i\leq j}y_{i}\leq1\}$.
	We can again upper bound this by replacing $Q$ with $\left[0,1\right]^{2n}$,
	and finally by Fubini's Theorem we have 
	\[
	\int_{\left[0,1\right]^{2n}}\mathbf{1}_{\{(y_{1},...,y_{2n});\forall1\leq j\leq2n,~y_{j}\geq a\}}\prod_{i=1}^{2n}\frac{1}{y_{i}}dy_{1}\cdot\cdot\cdot dy_{2n}=|\log(a)|^{2n},
	\]
	as required. To prove the second inequality we note by the same reasoning
	that 
	\[
	\int_{\Delta_{n}\times\Delta_{n}}\mathbf{1}_{\{(t_{1},...,t_{2n})\notin Z_{a}\}}dr ds=\binom{2n}{n}\int_{\Delta_{2n}}\mathbf{1}_{\{(x_{1},...,x_{2n})\notin Z_{a}\}}dx_{1}\cdot\cdot\cdot dx_{2n}.
	\]
	
	Now 
	\[
	\mathbf{1}_{\{(x_{1},...,x_{2n})\notin Z_{a}\}}\leq\mathbf{1}_{\left\{ x_{1}\leq a\right\} }+\mathbf{1}_{\left\{ 1-x_{2n}\leq a\right\} }+\sum_{j=2}^{2n}\mathbf{1}_{\left\{ x_{j}-x_{j-1}\leq a\right\} },
	\]
	
	and we can again focus on proving the bound for one summand, say $\mathbf{1}_{\left\{ x_{1}\leq a\right\} }$,
	with all the others being analogous. To do this we simply apply Fubini's
	Theorem, obtaining that 
	\[
	\int_{\Delta_{n}\times\Delta_{n}}\mathbf{1}_{\left\{ x_{1}\leq a\right\} }dx_{1}\cdot\cdot\cdot dx_{2n}\leq\int_{\left[0,1\right]^{2n}}\mathbf{1}_{\left\{ x_{1}\leq a\right\} }dx_{1}\cdot\cdot\cdot dx_{2n}=a.
	\]
\end{proof}


\begin{thebibliography}{ELD}

\bibitem[CHM89]{CHM}
M. Cranston, P. Hsu, and P. March.
{\it Smoothness of the convex hull of planar Brownian motion.}
Ann. Probab. 17 (1989), no. 1, 144--150. 
	
\bibitem[Eld12]{ELD} R. Eldan, {\it Volumetric Properties of the Convex
Hull of an n-dimensional Brownian Motion.} Electronic Journal of Probability,
Vol 19 (2013).

\bibitem[ElB83]{ELB}M. El Bachir {\it L'enveloppe convexe du mouvement
brownien.} Ph.D. thesis, Universit\'{e} Paul Sabatier, Toulouse,
France (1983).

\bibitem[EV85]{EV}Evans S. N., {\it On the Hausdorff dimension of Brownian
cone points}, Math. Proc. Camb. Phil. Soc. (1985), 98 , 343 \textendash{}
353. (1985).

\bibitem[FN09]{FN}M Fischer, G Nappo. {\it On the moments of the modulus
	of continuity of Itô processes}, Stochastic Analysis and Applications
28 (1), 103-122.

\bibitem[Le48]{LE}P. L\'evy. {\it Processus stochastiques et mouvement
Brownien.} Suivi d'une note de M. Lo\`eve. Gauthier- Villars, Paris
(1948).

\bibitem[MP]{MP} P. Mörters and Y. Peres. \textit{Brownian Motion}
Cambridge Series in Statistical and Probabilistic Mathematics, Volume
30 (2010).

\bibitem[Sp87]{SP}F. Spitzer. {\it Some theorems concerning 2-dimensional
Brownian motion.} Trans. Amer. Math. Soc. 87 187\textendash 197 (1958). 
\end{thebibliography}
\end{document}